
\documentclass{article}

\usepackage{microtype}
\usepackage{graphicx}
\usepackage{booktabs} 
\usepackage{enumerate}
\usepackage{epstopdf}
\usepackage{caption}
\usepackage{subcaption}
\usepackage{hyperref}



\usepackage{natbib}
\usepackage{amsmath}
\usepackage{amsthm}
\usepackage{amssymb}
\usepackage{mathtools}
\usepackage{tikz}
\usepackage{xcolor}
\usetikzlibrary{arrows}

\usepackage{algorithm}
\usepackage{algorithmic}
\usepackage{hyperref}
\usepackage{bm}



\newcommand{\argmin}{\mathrm{argmin}}

\newcommand{\poly}{\mathrm{poly}}
\newcommand{\rank}{\mathrm{rank}}

\newcommand{\E}{\mathbb{E}}

\def\R{\mathbb{R}}

\def\cN{\mathcal{N}}

\newcommand{\mat}[1]{{#1}}
\newcommand{\vect}[1]{{#1}}
\newcommand{\norm}[1]{\left\|#1\right\|}

\newcommand{\expect}[1]{\E\left[#1\right]}

\newtheorem{thm}{Theorem}[section]
\newtheorem{lem}{Lemma}[section]
\newtheorem{cor}{Corollary}[section]
\newtheorem{prop}{Proposition}[section]
\newtheorem{asmp}{Assumption}[section]
\newtheorem{defn}{Definition}[section]
\newtheorem{fact}{Fact}[section]

\allowdisplaybreaks

\usepackage[accepted]{aistats2019}

 \setlength{\parskip}{0.38\baselineskip}



\begin{document}




\twocolumn[

\aistatstitle{Linear Convergence of the Primal-Dual Gradient Method for Convex-Concave Saddle Point Problems without Strong Convexity}

\aistatsauthor{Simon S. Du \And Wei Hu}

\aistatsaddress{Carnegie Mellon University\\\texttt{ssdu@cs.cmu.edu} \And Princeton University\\  \texttt{huwei@cs.princeton.edu}} ]




\begin{abstract}
	\label{sec:abs}
	We consider the convex-concave saddle point problem $\min_{x}\max_{y} f(x)+y^\top A x-g(y)$ where $f$ is smooth and convex and $g$ is smooth and strongly convex.
We prove that if the coupling matrix $A$ has full column rank, the \emph{vanilla} primal-dual gradient method can achieve linear convergence \emph{even if $f$ is not strongly convex}.
Our result generalizes previous work which either requires $f$ and $g$ to be quadratic functions or requires proximal mappings for both $f$ and $g$.
We adopt a novel analysis technique that in each iteration uses a ``ghost" update as a reference, and show that the iterates in  
the primal-dual gradient method converge to this ``ghost" sequence.
Using the same technique we further give an analysis for the primal-dual stochastic variance reduced gradient  method for convex-concave saddle point problems with a finite-sum structure.
\end{abstract}

\section{Introduction}
\label{sec:intro}
We revisit the convex-concave saddle point problems of the form
\begin{equation}
	\min_{\vect{x} \in \mathbb{R}^{d_1}}\max_{\vect{y}\in\mathbb{R}^{d_2}} L(\vect{x},\vect{y}) = f(\vect{x}) + \vect{y}^\top \mat{A}\vect{x}  -g(\vect{y}), \label{eqn:basic_primal_dual}
\end{equation}
where both  $f$ and $g$ are convex functions and $A \in \mathbb{R}^{d_2 \times d_1}$ is a coupling matrix. 
This formulation has a wide range of applications, including supervised learning~\citep{zhang2015stochastic}, unsupervised learning~\citep{xu2005maximum,bach2008convex}, reinforcement learning~\citep{du2017stochastic}, robust optimization~\citep{ben2009robust},  PID control~\citep{hast2013pid}, etc.
See Section~\ref{sec:examples} for some concrete examples.

When the problem dimension is large, the most widely used and sometimes the only scalable methods to solve Problem~\eqref{eqn:basic_primal_dual} are first-order methods.
Arguably the simplest first-order algorithm 
is the \emph{primal-dual gradient method} (Algorithm~\ref{algo:pdg}), a natural generalization of the gradient descent algorithm,
which simultaneously performs gradient descent on the primal variable $x$ and gradient ascent on the dual variable $y$.
\begin{algorithm}[tb]
	\renewcommand{\algorithmicrequire}{\textbf{Inputs:}}
	\renewcommand{\algorithmicensure}{\textbf{Outputs:}}
	\caption{Primal-Dual Gradient Method}
	\label{algo:pdg}
	\begin{algorithmic}[1]
		\REQUIRE initial points $x_0 \in \R^{d_1}, y_0\in \R^{d_2}$, 
		step sizes $\eta_1, \eta_2 >0$ 
		\FOR{$t=0, 1,\ldots$ }
\STATE $\vect{x}_{t+1} = \vect{x}_t - \eta_1 \nabla_x L(x_t, y_t)$\\ \qquad \  $  = x_t - \eta_1\left(\nabla f(x_t) +A^\top y_t\right)$
\STATE $\vect{y}_{t+1} = \vect{y}_t + \eta_2 \nabla_y L(x_t, y_t)$\\ \qquad \  $  = y_t + \eta_2\left(Ax_t - \nabla g(y_t)\right)$
		\ENDFOR
	\end{algorithmic}
\end{algorithm}

There has been extensive research on analyzing the convergence rate of Algorithm~\ref{algo:pdg} and its variants.
It is known that 
if both $f$ and $g$ are strongly convex and admit efficient proximal mappings, then the \emph{proximal} primal-dual gradient method converges to the optimal solution at a linear rate~\citep{bauschke2011convex,palaniappan2016stochastic,chen1997convergence}, i.e., it only requires $O\left(\log\frac{1}{\epsilon}\right)$ iterations to obtain a solution that is  $\epsilon$-close to the optimum.

In many applications, however, we only have strong convexity in $g$ but \emph{no} strong convexity in $f$.
This motivates the following question:

\textbf{Does the primal-dual gradient method converge linearly to the optimal solution if $f$ is not strongly convex?}

Intuitively, a linear convergence rate is plausible.
Consider the corresponding primal problem of~\eqref{eqn:basic_primal_dual}:
\begin{align}
	\min_{x \in \mathbb{R}^{d_1}} P(x) = g^*(Ax) + f(x), \label{eqn:primal_problem}
\end{align}
where $g^*$ is the conjugate function of $g$.
Because $g$ is smooth and strongly convex, as long as $\mat{A}$ has full column rank, Problem~\eqref{eqn:primal_problem} has a smooth and strongly convex objective 
and thus vanilla gradient descent achieves linear convergence.
Therefore, one might expect a linearly convergent first-order algorithm for Problem~\eqref{eqn:basic_primal_dual} as well.
However, whether the vanilla primal-dual gradient method (Algorithm~\ref{algo:pdg}) has linear convergence turns out to be a nontrivial question.

Two recent results verified this conceptual experiment with additional assumptions:
\citet{du2017stochastic} required both $f$ and $g$ to be quadratic functions, and \citet{wang2017exploiting} required both $f$ and $g$ to have efficient proximal mappings and uses a \emph{proximal} primal-dual gradient method.
In this paper, we give an affirmative answer to this question with minimal assumptions.
Our main contributions are summarized below.

\subsection{Our Contributions}
\paragraph{Linear Convergence of the Primal-Dual Gradient Method.}
We show that as long as $f$ and $g$ are smooth, $f$ is convex, $g$ is strongly convex and the coupling matrix $\mat{A}$ has full column rank, Algorithm~\ref{algo:pdg} converges to the optimal solution at a linear rate.
See Section~\ref{sec:pdbg} for a precise statement of our result.
This result significantly generalizes previous ones which rely on stronger assumptions.
Note that all the assumptions are necessary for linear convergence: without any of them, the primal problem~\eqref{eqn:primal_problem} requires at least $\poly(\frac{1}{\epsilon})$ iterations to obtain an $\epsilon$-close solution~\citep{nesterov2013introductory}, so there is no hope of linear convergence for Problem~\eqref{eqn:basic_primal_dual}.

\paragraph{New Analysis Technique.}
To analyze the convergence of an optimization algorithm, a common way is to construct a potential function (also called Lyapunov function in the literature) which decreases after each iteration.
For example, for the primal problem~\eqref{eqn:primal_problem}, a natural potential function is $\norm{x_t - x^*}$, the distance between the current iterate and the optimal solution.
However, for the primal-dual gradient method, it is difficult to show similar potential functions like $\norm{x_t - x^*} + \norm{y_t - y^*}$ decrease because the two sequences, $\left\{x_t\right\}_{t=0}^\infty$ and $\left\{y_t\right\}_{t=0}^\infty$, are related to each other.

In this paper, we develop a novel method for analyzing the convergence rate of the primal-dual gradient method.
The key idea is to consider a ``ghost" sequence. For example, in our setting, the ``ghost" sequence comes from a gradient descent step for Problem~\eqref{eqn:primal_problem}. 
Then we relate the sequence generated by Algorithm~\ref{algo:pdg} to this ``ghost" sequence and show they are close in a certain way.
See Section~\ref{sec:pdbg} for details.
We believe this technique is applicable to other problems where we need to analyze multiple sequences.

\paragraph{Extension to Primal-Dual Stochastic Variance Reduced Gradient Method.}
Many optimization problems in machine learning have a finite-sum structure, and randomized algorithms have been proposed to exploit this structure and to speed up the convergence.
There has been extensive research in recent years on developing more efficient stochastic algorithms in such setting \citep{roux2012stochastic,johnson2013accelerating,defazio2014saga,xiao2014proximal,shalev2013stochastic,richtarik2014iteration,lin2015universal,zhang2015stochastic,allen2017katyusha}.
Among them, the
stochastic variance reduced gradient (SVRG) algorithm~\citep{johnson2013accelerating} is a popular one with computational complexity $O\left((n+\kappa)d\log\frac1\epsilon\right)$ for smooth and strongly convex objectives, where $n$ is the number of component functions, $d$ is the dimension of the variable, and $\kappa$ is a condition number that only depends on problem-dependent parameters like smoothness and strong convexity but not $n$.
Variants of SVRG for saddle point problems have been recently studied by~\citet{palaniappan2016stochastic,wang2017exploiting,du2017stochastic} and can achieve similar $O\left((n+\kappa)d\log\frac1\epsilon\right)$ running time.\footnote{$\kappa$ may be different in the primal and the primal-dual settings.}
However, these results all require additional assumptions.
In this paper, we use our analysis technique developed for Algorithm~\ref{algo:pdg} to show that the primal-dual SVRG method also admits $O\left((n+\kappa)d\log\frac1\epsilon\right)$ type computational complexity.


\subsection{Motivating Examples}
\label{sec:examples}
In this subsection we list some machine learning applications that naturally lead to  convex-concave saddle point problems.

\paragraph{Reinforcement Learning.}
For policy evaluation task in reinforcement learning, we have data $\left\{\left(s_t,r_t,s_{t+1}\right)\right\}_{t=1}^n$ generated by a policy $\pi$ where $s_t$ is the state at the $t$-th time step, $r_t$ is the reward and $s_{t+1}$ is the state at the $(t+1)$-th step.
We also have a discount factor $0< \gamma < 1$ and a feature function $\phi(\cdot)$ which maps a state to a feature vector. 
Our goal is to learn a linear value function $V^\pi\left(s\right) \approx x^\top \phi\left(s\right)$ which represents the long term expected reward starting from state $s$ using the policy $\pi$.
A common way to estimate $x$ is to minimize the empirical mean squared projected Bellman error (MSPBE):\begin{align}
\min_x \left(Ax-b\right)^\top C^{-1}\left(Ax-b\right), \label{eqn:mspbe}
\end{align}where $A = \sum_{t=1}^{n}\phi(s_t)\left(\phi(s_t)-\gamma\phi(s_{t+1})\right)^\top$, $b=\sum_{t=1}^{n}r_t\phi(s_t)$ and $C = \sum_{t=1}^{n}\phi(s_t)\phi(s_t)^\top$. 
Note that directly using gradient descent to solve problem~\eqref{eqn:mspbe} is expensive because we need to invert a matrix $\mat{C}$.
\cite{du2017stochastic} considered the equivalent saddle point formulation:
\begin{align*}
\min_x\max_y L(x,y)= -y^\top A x -\frac{1}{2}y^\top \mat{C}y  + b^\top y.
\end{align*}
The gradient of $L$ can be computed more efficiently than the original formulation \eqref{eqn:mspbe}, and $L$ has a finite-sum structure.

\paragraph{Empirical Risk Minimization.}
Consider the classical supervised learning problem of learning a linear predictor $x\in \R^d$ given $n$ data points $(a_i, b_i) \in \R^d \times \R$.
Denote by $A \in \R^{n\times d}$ the data matrix whose $i$-th row is $a_i^\top$.
Then the empirical risk minimization (ERM) problem amounts to solving
\begin{align*}
\min_{x\in \R^d} \ell(Ax) + f(x),
\end{align*}
where $\ell$ is induced by some loss function and $f$ is a regularizer; both $f$ and $\ell$ are convex functions.
Equivalently, we can solve the dual problem $\max_{y\in\R^n} \left\{ -\ell^*(y)-f^*(-A^\top y) \right\}$ or the saddle point problem
$\min_{x\in\R^d}\max_{y\in\R^n} \left\{ y^\top A x - \ell^*(y) + f(x)\right\}$.
The saddle point formulation is favorable in many scenarios, e.g., when such formulation admits a  finite-sum structure~\citep{zhang2015stochastic,wang2017exploiting}, reduces communication complexity in the distributed setting~\citep{xiao2017dscovr} or exploits sparsity structure~\citep{lei2017doubly}.

\paragraph{Robust Optimization.}
The robust optimization framework~\citep{ben2009robust} aims at minimizing an objective function with uncertain data, which naturally leads to
a saddle point problem, often with the following form:\begin{align}
	\min_x \max_y \mathbb{E}_{\xi\sim P(y)}\left[f(x,\xi)\right], \label{eqn:dro}
\end{align} where $f$ is some loss function we want to minimize and the distribution of the data is parametrized by $P(y)$.
For certain special cases~\citep{liu2017primal}, Problem~\eqref{eqn:dro} has the bilinear form as in~\eqref{eqn:basic_primal_dual}.

\subsection{Comparison with Previous Results}
\label{sec:rel}
\begin{table*}[tb]
	\centering
	\resizebox{2\columnwidth}{!}{%

	\renewcommand{\arraystretch}{1.5}
	\begin{tabular}{ |c|c|c|c|c|c|c|}
		\hline
		{\bf Paper} & $f$ smooth &$f$ s.c.& $g$ smooth & $g$ s.c. 
		 & $A$ full column rank & Other Assumptions\\ 
		\hline
		\citep{chen1997convergence} & \textbackslash & Yes& \textbackslash & Yes  & No & Prox maps for $f$ and $g$\\
		\hline
		\citep{du2017stochastic} &  Yes &  No & Yes & Yes  & Yes & $f$ and $g$ are quadratic\\
		\hline
		\citep{wang2017exploiting} &  \textbackslash &  No & \textbackslash & Yes  & Yes & Prox maps for $f$ and $g$ \\
		\hline
		Folklore &  Yes &  Yes & Yes & Yes  & No & No \\
		\hline
		\bf This Paper&  Yes &  No & Yes & Yes  & Yes & No\\
		\hline
	\end{tabular}
	}
	\caption{Comparisons of assumptions that lead to the linear convergence of  primal-dual gradient method for solving Problem~\eqref{eqn:basic_primal_dual}.
		When we have proximal mappings for $f$ and $g$, we do not need their smoothness.		\label{tab:complexity}
	}
\end{table*}

There have been many attempts to analyze the primal-dual gradient method or its variants.
In particular,~\citet{chen1997convergence,chambolle2011first,palaniappan2016stochastic} showed that if both $f$ and $g$ are strongly convex and have efficient proximal mappings, then the proximal primal-dual gradient method achieves a linear convergence rate.\footnote{\citet{chen1997convergence,palaniappan2016stochastic} considered a more general formulation than Problem~\eqref{eqn:basic_primal_dual}. Here we specialize in the bi-linear saddle point problem.}
In fact, even without proximal mappings, as long as both $f$ and $g$ are smooth and strongly convex, Algorithm~\ref{algo:pdg} achieves a linear convergence rate.
In Appendix~\ref{sec:both_strong_proof} we give a simple proof of this fact.

Two recent papers show that it is possible to achieve linear convergence even without strong convexity in $f$.
The key is the additional assumption that $A$ has full column rank, 
which helps ``transfer" $g$'s strong convexity to $f$.
\cite{du2017stochastic} considered the case when both $f$ and $g$ are quadratic functions, i.e., when Problem~\eqref{eqn:basic_primal_dual} has the following special form:
\begin{align*}
 L(x,y) = x^\top Bx +b^\top x + y^\top Ax - y^\top C y+c^\top y.
\end{align*}
Note that $B$ does not have to be positive definite (but $C$ has to be), and thus strong convexity is not necessary in the primal variable.
Their analysis is based on writing the gradient updates as a linear dynamic system (c.f. Equation~(41) in~\citep{du2017stochastic}):
\begin{align}
\begin{bmatrix}
x_{t+1} - x^*\\
\sqrt{\frac{\eta_1}{\eta_2}}\left(y_{t+1}-y^*\right)
\end{bmatrix} 
=  \left(I-G\right)
\begin{bmatrix}
x_{t} - x^*\\
\sqrt{\frac{\eta_1}{\eta_2}}\left(y_{t}-y^*\right)
\end{bmatrix}, \label{eqn:linear_dynamical_system}
\end{align} where $G$ is some fixed matrix that depends on $A,B,C$ and step sizes.
Next, it suffices to bound the spectral norm of $G$ (which can be made strictly less than $1$)
to show that $\left(
x_{t} - x^*,
\sqrt{\frac{\eta_1}{\eta_2}}\left(y_{t}-y^*\right)\right)
$ converges to $(0,0)$ at a linear rate.
However, it is difficult to generalize this approach to general saddle point problem~\eqref{eqn:basic_primal_dual} since only when $f$ and $g$ are quadratic do we have the linear form~\eqref{eqn:linear_dynamical_system}.

\cite{wang2017exploiting} considered the proximal primal-dual gradient method.
They construct a potential function (c.f. Page 15 in~\citep{wang2017exploiting})  and show it decreases at a linear rate.
However, this potential function heavily relies on the proximal mappings so it is difficult to use this technique to analyze Algorithm~\ref{algo:pdg}.

In Table~\ref{tab:complexity}, we summarize different assumptions sufficient for linear convergence used in different papers.

\subsection{Paper Organization}
The rest of the paper is organized as follows.
We give necessary definitions in Section~\ref{sec:pre}.
In Section~\ref{sec:pdbg}, we present our main result for the primal-dual gradient method and its proof.
In Section~\ref{sec:svrg}, we extend our analysis to the primal-dual stochastic variance reduced gradient method.
In Section~\ref{sec:exp}, we use some preliminary experiments to verify our theory.
We conclude in Section~\ref{sec:con} and put omitted proofs in the appendix.

\section{Preliminaries}
\label{sec:pre}
Let $\norm{\cdot}$ denote the Euclidean ($L_2$) norm of a vector, 
and let $\langle \cdot, \cdot \rangle$ denote the standard Euclidean inner product between two vectors.
For a matrix $\mat{A} \in \R^{m\times n}$, let $\sigma_i(\mat{A})$ be its $i$-th largest singular value, and let $\sigma_{\max}\left(\mat{A}\right) := \sigma_1(\mat{A})$ and $\sigma_{\min}(\mat{A}) := \sigma_{\min\{m, n\}} (\mat{A})$ be the largest and the smallest singular values of $A$, respectively.
For a function $f$, we use $\nabla f$ to denote its gradient.
Denote $[n] := \{1, 2, \ldots, n\}$.
Let $I_d$ be the identity matrix in $\R^{d\times d}$.

The smoothness and the strong convexity of a function are defined as follows:
\begin{defn}
	For a differentiable function $\phi:\R^d \to \R$, we say
	\begin{itemize}
		\item $\phi$ is $\beta$-smooth if 
		$\norm{\nabla \phi(\vect{u})-\nabla \phi(\vect{v})} \le \beta \norm{\vect{u}-\vect{v}}$ for all $u, v \in \R^d$;
		\item $\phi$ is $\alpha$-strongly convex if $\phi(v) \ge \phi(u) + \langle \nabla \phi(u), v-u \rangle + \frac{\alpha}{2}\|u-v\|^2$ for all $u, v \in \R^d$.
	\end{itemize}
\end{defn}

We also need the definition of conjugate function:
\begin{defn}
	The conjugate of a function $\phi: \R^d \to \R$ is defined as
	\[
	\phi^*(y) := \sup_{x\in \R^d} \left\{ \langle x, y \rangle - \phi(x) \right\}, \qquad \forall y\in\R^d.
	\]
\end{defn}

It is well-known that if $\phi$ is closed and convex, then $\phi^{**} = \phi$.
If $\phi$ is smooth and strongly convex, its conjugate $\phi^*$ has the following properties:

\begin{fact} \label{fact:conjugate}
	If $\phi:\R^d\to \R$ is $\beta$-smooth and $\alpha$-strongly convex ($\beta\ge\alpha>0$), then 
	\begin{enumerate}[(i)]
		\item (\citep{kakadeduality})	$\phi^*:\R^d\to\R$ is $\frac{1}{\alpha}$-smooth and $\frac{1}{\beta}$-strongly convex.
		\item (\citep{rockafellar1970convex}) The gradient mappings $\nabla \phi$ and $\nabla \phi^*$ are inverse of each other.
	\end{enumerate}
\end{fact}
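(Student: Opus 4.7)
The plan is to establish part (ii) first, then leverage it to derive part (i). For (ii), I start from the definition $\phi^*(y) = \sup_{x \in \R^d} \{\langle x, y \rangle - \phi(x)\}$. Because $\phi$ is $\alpha$-strongly convex with $\alpha > 0$, the map $x \mapsto \langle x, y \rangle - \phi(x)$ is $\alpha$-strongly concave in $x$, so for each $y$ the supremum is attained at a unique point $x^*(y)$ characterized by the first-order condition $\nabla \phi(x^*(y)) = y$. Strong convexity plus $\beta$-smoothness further make $\nabla \phi$ a bijection of $\R^d$ onto itself (injectivity from strong monotonicity, surjectivity by a coercivity/minimization argument), so $y \mapsto x^*(y)$ is well defined on all of $\R^d$. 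A Danskin-type envelope argument, or a direct first-order perturbation using the $\alpha$-strong concavity of the inner problem, then shows that $\phi^*$ is differentiable with $\nabla \phi^*(y) = x^*(y)$. Reading this identity in both directions yields $\nabla \phi^*(\nabla \phi(x)) = x$ and $\nabla \phi(\nabla \phi^*(y)) = y$.

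For (i), I translate conditions on $\nabla \phi$ into conditions on $\nabla \phi^*$ using (ii). Fix $y_1, y_2 \in \R^d$ and set $x_i := \nabla \phi^*(y_i)$, so that $y_i = \nabla \phi(x_i)$. Strong convexity of $\phi$ gives the monotonicity estimate $\langle \nabla \phi(x_1) - \nabla \phi(x_2), x_1 - x_2 \rangle \ge \alpha \|x_1 - x_2\|^2$, which in the $y$-variables reads
\[
\langle y_1 - y_2,\; \nabla \phi^*(y_1) - \nabla \phi^*(y_2) \rangle \;\ge\; \alpha \,\|\nabla \phi^*(y_1) - \nabla \phi^*(y_2)\|^2.
\]
Cauchy--Schwarz on the left-hand side then gives $\|\nabla \phi^*(y_1) - \nabla \phi^*(y_2)\| \le \frac{1}{\alpha}\|y_1 - y_2\|$, i.e., $\frac{1}{\alpha}$-smoothness of $\phi^*$. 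Symmetrically, $\beta$-smoothness plus convexity of $\phi$ yields the Baillon--Haddad co-coercivity bound $\langle \nabla \phi(x_1) - \nabla \phi(x_2), x_1 - x_2 \rangle \ge \frac{1}{\beta} \|\nabla \phi(x_1) - \nabla \phi(x_2)\|^2$, which in the $y$-variables becomes $\langle y_1 - y_2,\, \nabla \phi^*(y_1) - \nabla \phi^*(y_2) \rangle \ge \frac{1}{\beta}\|y_1 - y_2\|^2$. This is the standard monotonicity characterization of $\frac{1}{\beta}$-strong convexity for the smooth convex function $\phi^*$, so (i) follows.

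The main obstacle is the everywhere-differentiability of $\phi^*$: once one verifies via the unique-maximizer argument that $\nabla \phi^*$ exists globally, everything else reduces to routine monotonicity manipulations and Cauchy--Schwarz. Since both conclusions are classical (hence the citations to \citet{kakadeduality} and \citet{rockafellar1970convex}), I would keep the envelope step careful but otherwise terse.
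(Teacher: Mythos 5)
Your proof is correct. Note that the paper does not actually prove this statement---it is stated as a known fact with citations to \citet{kakadeduality} and \citet{rockafellar1970convex}---so there is no in-paper argument to compare against. What you give is the standard self-contained derivation: bijectivity of $\nabla\phi$ plus the envelope theorem for (ii), and then the two monotonicity translations (strong monotonicity of $\nabla\phi$ giving Lipschitzness of $\nabla\phi^*$ via Cauchy--Schwarz, and Baillon--Haddad co-coercivity giving strong monotonicity of $\nabla\phi^*$) for (i). All steps check out, including the implicit use of the equivalence between $\mu$-strong monotonicity of the gradient and $\mu$-strong convexity, which follows by integrating along the segment between two points.
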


\section{Linear Convergence of the Primal-Dual Gradient Method}
\label{sec:pdbg}
In this section we show the linear convergence of Algorithm~\ref{algo:pdg} on Problem~\eqref{eqn:basic_primal_dual} under the following assumptions:

\begin{asmp}\label{asmp:f}
	$f$ is convex and $\rho$-smooth ($\rho\ge0$). 
\end{asmp}
\begin{asmp}\label{asmp:g}
	$g$ is $\beta$-smooth and $\alpha$-strongly convex ($\beta \ge \alpha >0$). 
\end{asmp}
\begin{asmp}\label{asmp:A}
	The matrix $A \in \R^{d_2 \times d_1}$ satisfies $\rank(A) = d_1$.
\end{asmp}

While the first two assumptions on $f$ and $g$ are standard in convex optimization literature, the third one is important for ensuring linear convergence of Problem~\eqref{eqn:basic_primal_dual}.
Note, for example, that if $\mat{A}$ is the all-zero matrix, then there is no interaction between $\vect{x}$ and $\vect{y}$, and to solve the convex optimization problem on $\vect{x}$ we need at least $\Omega\left(\frac{1}{\sqrt{\epsilon}}\right)$ iterations~\citep{nesterov2013introductory} instead of $O\left( \log\frac{1}{\epsilon} \right)$.

Denote by $(x^*, y^*) \in \R^{d_1} \times \R^{d_2}$ the optimal solution to Problem~\eqref{eqn:basic_primal_dual}.
For simplicity, we let $\sigma_{\max} := \sigma_{\max}(A)$ and $\sigma_{\min} := \sigma_{\min}(A)$.

Recall 
the first-order optimality condition:
\begin{equation} \label{eqn:opt-condition}
\begin{cases}
\nabla_x L(x^*, y^*) = \nabla f(x^*) + A^\top y^* = 0, \\
\nabla_y L(x^*, y^*) = -\nabla g(y^*) + Ax^* = 0.
\end{cases}
\end{equation}

\begin{thm}\label{thm:main}
	In the setting of Algorithm~\ref{algo:pdg}, define $a_t := \| x_{t} - x^* \|$ and $b_t := \left\|  y_t -  \nabla g^* (Ax_t)  \right\|$. Let $\lambda := \frac{2 \beta \sigma_{\max}\cdot \left(\rho+\frac{\sigma_{\max}^2}{\alpha}\right)}{\alpha \sigma_{\min}^2}$ and $P_t := \lambda a_t + b_t$.
If we choose $\eta_1 = \frac{\alpha}{(\alpha+\beta) \left( \frac{\sigma_{\max}^2}{\alpha} + \lambda\sigma_{\max} \right)}$ and $\eta_2 = \frac{2}{\alpha+\beta}$,
then we have
	\begin{align*}
	P_{t+1} 
	\le \left( 1 -  C \cdot \frac{\alpha^2 \sigma_{\min}^4}{\beta^3 \sigma_{\max}^2 \cdot \left(\rho+ \frac{\sigma_{\max}^2}{\alpha} \right)} \right) P_t
	\end{align*} for some absolute constant $C > 0$.
\end{thm}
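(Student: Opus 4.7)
The plan is to use the ``ghost'' sequence $\tilde y_t := \nabla g^*(A x_t)$ as the reference dual iterate. Note that by Fact~\ref{fact:conjugate}(ii) and the optimality conditions \eqref{eqn:opt-condition}, $y^* = \nabla g^*(Ax^*)$, so the ghost coincides with $y^*$ at the optimum. Thus $b_t = \|y_t - \tilde y_t\|$ measures how far $y_t$ lags behind the instantaneously best response to $x_t$, while $a_t = \|x_t - x^*\|$ is the primal error. The goal is to show the two errors contract jointly in the weighted sum $P_t = \lambda a_t + b_t$, with $\lambda$ chosen so that the cross-terms balance out.

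First I would analyze the primal update. Define the ghost primal step $\hat x_{t+1} := x_t - \eta_1(\nabla f(x_t) + A^\top \tilde y_t)$. Because $A^\top \tilde y_t = A^\top \nabla g^*(A x_t) = \nabla(g^*\circ A)(x_t)$, this $\hat x_{t+1}$ is exactly a gradient-descent step on the primal objective $P(x) = f(x) + g^*(Ax)$. By Fact~\ref{fact:conjugate}(i), Assumption~\ref{asmp:f}, and Assumption~\ref{asmp:A}, $P$ is $(\rho + \sigma_{\max}^2/\alpha)$-smooth and $(\sigma_{\min}^2/\beta)$-strongly convex, so for the chosen $\eta_1$ (which is $\le 1/L_P$) one has $\|\hat x_{t+1} - x^*\| \le (1 - \eta_1 \sigma_{\min}^2/\beta)\, a_t$. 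The actual iterate satisfies $x_{t+1} - \hat x_{t+1} = -\eta_1 A^\top(y_t - \tilde y_t)$, giving
\begin{equation*}
a_{t+1} \;\le\; \bigl(1 - \eta_1\sigma_{\min}^2/\beta\bigr)\, a_t \;+\; \eta_1 \sigma_{\max}\, b_t.
\end{equation*}

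Next I would analyze the dual update. Since $A x_t = \nabla g(\tilde y_t)$, the dual step can be rewritten as $y_{t+1} = y_t - \eta_2\bigl(\nabla g(y_t) - \nabla g(\tilde y_t)\bigr)$, which is exactly a gradient-descent step on the auxiliary strongly-convex-smooth function $h(y) := g(y) - \langle\nabla g(\tilde y_t), y\rangle$ whose minimizer is $\tilde y_t$. The standard contraction with step size $\eta_2 = 2/(\alpha+\beta)$ yields $\|y_{t+1} - \tilde y_t\| \le \tfrac{\beta-\alpha}{\beta+\alpha}\, b_t$. Then I compare $\tilde y_t$ with $\tilde y_{t+1}$: by Fact~\ref{fact:conjugate}(i), $\nabla g^*$ is $(1/\alpha)$-Lipschitz, so $\|\tilde y_{t+1} - \tilde y_t\| \le (\sigma_{\max}/\alpha)\|x_{t+1} - x_t\|$. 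A direct bound of $\|x_{t+1} - x_t\| = \eta_1\|\nabla f(x_t) + A^\top y_t\|$ using the optimality condition \eqref{eqn:opt-condition}, the triangle inequality, and the $(1/\alpha)$-Lipschitzness of $\nabla g^*$ yields $\|x_{t+1}-x_t\| \le \eta_1\bigl((\rho + \sigma_{\max}^2/\alpha) a_t + \sigma_{\max} b_t\bigr)$. Combining,
\begin{equation*}
b_{t+1} \;\le\; \tfrac{\sigma_{\max}\eta_1}{\alpha}\bigl(\rho + \tfrac{\sigma_{\max}^2}{\alpha}\bigr)\, a_t \;+\; \Bigl(\tfrac{\beta-\alpha}{\beta+\alpha} + \tfrac{\sigma_{\max}^2\eta_1}{\alpha}\Bigr) b_t.
\end{equation*}

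The final step is to combine these into the potential $P_t = \lambda a_t + b_t$. The coefficient of $b_t$ in $P_{t+1}$ is $\lambda\eta_1\sigma_{\max} + \tfrac{\beta-\alpha}{\beta+\alpha} + \tfrac{\sigma_{\max}^2\eta_1}{\alpha}$; the given $\eta_2$ and $\eta_1 = \alpha/[(\alpha+\beta)(\sigma_{\max}^2/\alpha + \lambda\sigma_{\max})]$ are tuned so this equals $1 - \tfrac{\alpha}{\alpha+\beta}$. The coefficient of $a_t$ is $\lambda(1 - \eta_1\sigma_{\min}^2/\beta) + \tfrac{\sigma_{\max}\eta_1}{\alpha}(\rho+\sigma_{\max}^2/\alpha)$; the prescribed $\lambda = 2\beta\sigma_{\max}(\rho+\sigma_{\max}^2/\alpha)/(\alpha\sigma_{\min}^2)$ is exactly what makes the extra additive term equal half of $\lambda\eta_1\sigma_{\min}^2/\beta$, so the coefficient becomes $\lambda(1 - \eta_1\sigma_{\min}^2/(2\beta))$. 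Requiring both contraction rates to exceed the claimed $\mu$ reduces (after substituting $\eta_1$ and using that the $\lambda\sigma_{\max}$ term dominates $\sigma_{\max}^2/\alpha$) to $\mu \lesssim \alpha^2\sigma_{\min}^4 / [\beta^3\sigma_{\max}^2(\rho+\sigma_{\max}^2/\alpha)]$, recovering the stated rate.

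The main obstacle is the careful balancing of constants: one must verify that the cross-term $\eta_1\sigma_{\max} b_t$ appearing in the $a_{t+1}$ bound (inflated by $\lambda$) together with the $\sigma_{\max}^2\eta_1/\alpha$ term in $b_{t+1}$ do not overwhelm the contraction $\tfrac{\beta-\alpha}{\beta+\alpha}$, while simultaneously the $(\sigma_{\max}/\alpha)(\rho+\sigma_{\max}^2/\alpha)\eta_1 a_t$ term in $b_{t+1}$ does not overwhelm the primal contraction scaled by $\lambda$. The specific choice of $\lambda$ makes these two constraints compatible, but the algebraic bookkeeping is where care is needed.
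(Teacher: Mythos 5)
Your proposal is correct and follows essentially the same route as the paper's proof: the same primal ``ghost'' gradient-descent step on $P(x)=f(x)+g^*(Ax)$, the same interpretation of the dual update as gradient descent toward $\nabla g^*(Ax_t)$, the same bound on $\|x_{t+1}-x_t\|$ to control the drift of the ghost target, and the same balancing of $\lambda$, $\eta_1$, $\eta_2$ in the potential $P_t=\lambda a_t+b_t$ (your $\tfrac{\beta-\alpha}{\beta+\alpha}$ is exactly the paper's $1-\alpha\eta_2$). The constant-tracking you flag as the remaining work matches the paper's Appendix computation of $c_1$ and $c_2$.
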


In this theorem, we use $P_t = \lambda a_t + b_t$ as the potential function and show that this function shrinks at a geometric rate.
Note that from~\eqref{eqn:opt-condition} and Fact~\ref{fact:conjugate} (ii) we have $y^* = (\nabla g)^{-1} (Ax^*) = \nabla g^*(Ax^*)$.
Then we have upper bounds 
$\norm{x_t - x^*} = a_t \le \frac1\lambda P_t$
and
$\norm{y_t - y^*} \le \norm{y_t - \nabla g^* (Ax_t)} + \norm{\nabla g^* (Ax_t) - y^*} = b_t + \norm{\nabla g^* (Ax_t) - \nabla g^* (Ax^*)} \le b_t + \frac{\sigma_{\max}}{\alpha} a_t \le \max\left\{1, \frac{\sigma_{\max}}{\alpha\lambda}\right\}P_t$,
which imply that if $P_t$ is small then $(x_t, y_t)$ will be close to the optimal solution $(x^*, y^*)$.
Therefore a direct corollary of Theorem~\ref{thm:main} is: 
\begin{cor}\label{cor:distance_to_opt}
	For any $\epsilon>0$, after $O^*\left(\log\frac{P_0}{\epsilon} \right)$ iterations, we have 
	$\norm{x_t-x^*} \le \epsilon$ and $\norm{y_t-y^*}\le \epsilon$, where $O^*(\cdot)$ hides polynomial factors in $\beta, 1/\alpha, \sigma_{\max}, 1/\sigma_{\min}$ and $\rho$.
\end{cor}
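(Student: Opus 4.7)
The plan is to iterate the per-step contraction of Theorem~\ref{thm:main} and then convert the resulting geometric decay of the potential into the desired Euclidean errors via the two estimates already stated in the paragraph immediately following Theorem~\ref{thm:main}. There is no new analytic content; the whole argument reduces to one induction plus a routine check that the constants hidden in $O^*$ have the claimed form.

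First, set $\mu := C \cdot \frac{\alpha^2 \sigma_{\min}^4}{\beta^3 \sigma_{\max}^2 (\rho + \sigma_{\max}^2/\alpha)}$, which lies in $(0,1)$ (shrinking the absolute constant $C$ if necessary). A trivial induction on $t$ using Theorem~\ref{thm:main} gives $P_t \le (1-\mu)^t P_0 \le e^{-\mu t} P_0$, so $P_t \le \delta$ whenever $t \ge \mu^{-1} \log(P_0/\delta)$.

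Second, I translate this into iterate errors. The discussion immediately after Theorem~\ref{thm:main} already established $\|x_t - x^*\| \le \lambda^{-1} P_t$ and $\|y_t - y^*\| \le \max\{1, \sigma_{\max}/(\alpha\lambda)\} P_t$. Setting $c := \min\{\lambda,\, 1/\max\{1, \sigma_{\max}/(\alpha\lambda)\}\}$ and choosing $\delta := c\,\epsilon$, both Euclidean errors fall below $\epsilon$ as soon as $t \ge \mu^{-1}\log(P_0/(c\epsilon))$.

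Third, I verify that this count matches the claimed shape. By inspection of the formulas for $\mu$, $\lambda$, and $\sigma_{\max}/(\alpha\lambda)$, each of $\mu^{-1}$ and $c^{-1}$ is a rational expression in $\beta, 1/\alpha, \sigma_{\max}, 1/\sigma_{\min}, \rho$ with non-negative exponents once cleared, hence is $\poly(\beta, 1/\alpha, \sigma_{\max}, 1/\sigma_{\min}, \rho)$. Writing $\mu^{-1}\log(P_0/(c\epsilon)) = \mu^{-1}\log(P_0/\epsilon) + \mu^{-1}\log(1/c)$, the prefactor $\mu^{-1}$ and the additive polylog overhead $\mu^{-1}\log(1/c)$ are both absorbed into the $O^*$ notation, yielding exactly $t = O^*(\log(P_0/\epsilon))$.

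The only real obstacle is this last bookkeeping step: one must confirm that $c^{-1}$ and $\mu^{-1}$ are genuinely polynomial in the five listed parameters so that no stray $\poly(1/\epsilon)$ factor creeps in through the translation from $P_t$ to $(\|x_t-x^*\|, \|y_t-y^*\|)$. Everything else is immediate from Theorem~\ref{thm:main} and the estimates the authors already wrote out.
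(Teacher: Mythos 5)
Your proposal is correct and follows exactly the paper's (largely implicit) argument: the authors derive the corollary directly from the paragraph preceding it, namely the bounds $\norm{x_t-x^*}\le \frac1\lambda P_t$ and $\norm{y_t-y^*}\le \max\left\{1,\frac{\sigma_{\max}}{\alpha\lambda}\right\}P_t$ combined with the geometric decay of $P_t$ from Theorem~\ref{thm:main}. Your additional bookkeeping that $\mu^{-1}$ and $c^{-1}$ are polynomially bounded in the listed parameters (e.g., $\lambda^{-1}\le \beta/\sigma_{\min}$ using $\alpha\le\beta$ and $\sigma_{\min}\le\sigma_{\max}$) is a harmless elaboration the paper omits.
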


We remark that our theorem suggests that step sizes depend on problem parameters which may be unknown.
In practice, we may try to use a small amount of data to estimate them first or use the adaptive tuning heuristic introduced in~\citep{wang2017exploiting}.


\subsection{Proof of Theorem~\ref{thm:main}} \label{sec:proof-main-thm}
Now we present the proof of Theorem~\ref{thm:main}.

First recall the standard linear convergence guarantee of gradient descent on a smooth and strongly convex objective. See Theorem 3.12 in \citep{bubeck2015convex} for a proof.
\begin{lem} \label{lem:gd-converge}
	Suppose $\phi:\R^d\to\R$ is $\gamma$-smooth and $\delta$-strongly convex, and let $\bar x := \argmin_{x\in\R^d}\phi(x)$.
	For any $0<\eta \le \frac{2}{\gamma+\delta}$, $x \in \R^d$, letting $\tilde{x} = x - \eta \nabla \phi(x)$, we have
	\[
	\norm{\tilde{x} - \bar x} \le (1-\delta\eta) \norm{x - \bar x}.
	\]
\end{lem}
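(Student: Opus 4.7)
The plan is to prove the contraction $\norm{\tilde{x} - \bar{x}} \le (1-\eta\delta)\norm{x - \bar{x}}$ by a ``shift by $\tfrac{\delta}{2}\|\cdot\|^2$'' reduction that converts the smooth strongly convex function into a smooth convex one, so that cocoercivity of the gradient can be applied cleanly. Concretely, I would first define $\psi(x) := \phi(x) - \tfrac{\delta}{2}\norm{x}^2$. Since $\phi$ is $\delta$-strongly convex, $\psi$ is convex; since $\phi$ is $\gamma$-smooth and we subtract a $\delta$-smooth quadratic, $\psi$ is $(\gamma-\delta)$-smooth. The first-order optimality condition $\nabla\phi(\bar{x}) = 0$ then translates into $\nabla\psi(\bar{x}) = -\delta\bar{x}$.

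Next, using $\nabla\phi(x) = \delta x + \nabla\psi(x)$ together with this relation, the gradient-descent update admits the clean rewriting
\[
\tilde{x} - \bar{x} \;=\; (1-\eta\delta)(x-\bar{x}) \;-\; \eta\bigl(\nabla\psi(x) - \nabla\psi(\bar{x})\bigr).
\]
Squaring both sides produces the leading term $(1-\eta\delta)^2\norm{x-\bar{x}}^2$, a cross term involving $\langle x-\bar{x},\,\nabla\psi(x)-\nabla\psi(\bar{x})\rangle$, and a tail term $\eta^2\norm{\nabla\psi(x)-\nabla\psi(\bar{x})}^2$. Then I would invoke the standard cocoercivity inequality for the gradient of a convex, $(\gamma-\delta)$-smooth function, namely $\langle x-\bar{x},\,\nabla\psi(x)-\nabla\psi(\bar{x})\rangle \ge \tfrac{1}{\gamma-\delta}\norm{\nabla\psi(x)-\nabla\psi(\bar{x})}^2$, to lower-bound the cross term. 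Collecting the $\norm{\nabla\psi(x)-\nabla\psi(\bar{x})}^2$ contributions yields a residual coefficient of $\eta^2 - \tfrac{2\eta(1-\eta\delta)}{\gamma-\delta}$, and a short rearrangement shows that this quantity is non-positive \emph{exactly} when $\eta \le \tfrac{2}{\gamma+\delta}$. Dropping the non-positive term leaves $\norm{\tilde{x}-\bar{x}}^2 \le (1-\eta\delta)^2\norm{x-\bar{x}}^2$, and taking square roots completes the proof.

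The main subtlety I expect to navigate is recovering the precise contraction factor $(1-\eta\delta)$ rather than the weaker bound $\sqrt{1 - \tfrac{2\eta\delta\gamma}{\delta+\gamma}}$ that one obtains by naively expanding $\norm{\tilde{x}-\bar{x}}^2 = \norm{x-\bar{x}}^2 - 2\eta\langle x-\bar{x}, \nabla\phi(x)\rangle + \eta^2\norm{\nabla\phi(x)}^2$ and applying the Nesterov three-point inequality directly. The two bounds agree only at the endpoint $\eta = \tfrac{2}{\gamma+\delta}$, and $(1-\eta\delta)$ is strictly tighter for smaller step sizes; the ``shift'' trick is what absorbs the strong-convexity modulus into a linear contraction up front, leaving the purely smooth-convex residual to be handled by cocoercivity. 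The boundary case $\gamma = \delta$ is harmless since then $\psi$ is affine and $\nabla\psi(x) - \nabla\psi(\bar{x}) \equiv 0$, reducing the update to an exact $(1-\eta\delta)$-contraction.
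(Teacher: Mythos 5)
Your proof is correct, but it does not follow the paper's route: the paper gives no proof of this lemma at all, simply citing Theorem~3.12 of \citet{bubeck2015convex}. That standard argument expands $\norm{\tilde x-\bar x}^2=\norm{x-\bar x}^2-2\eta\langle\nabla\phi(x),x-\bar x\rangle+\eta^2\norm{\nabla\phi(x)}^2$ and applies the three-point coercivity inequality, which---exactly as you note---yields the factor $\bigl(1-\tfrac{2\eta\delta\gamma}{\delta+\gamma}\bigr)^{1/2}$; this coincides with $1-\delta\eta$ only at the endpoint $\eta=\tfrac{2}{\gamma+\delta}$ and is weaker for smaller $\eta$, whereas the paper invokes the lemma with step sizes strictly inside the admissible range (e.g.\ in Propositions~\ref{prop:ghost-linear-converge} and~\ref{prop:dual}). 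Your shift-by-$\tfrac{\delta}{2}\norm{\cdot}^2$ decomposition plus cocoercivity of the convex $(\gamma-\delta)$-smooth residual delivers the stated constant on the whole range $0<\eta\le\tfrac{2}{\gamma+\delta}$ and is self-contained, so it arguably proves exactly what the citation only literally supports at the endpoint (the weaker constant would, however, only affect unimportant absolute constants downstream). Two small points to make explicit when writing it up: (i) the claim that $\psi=\phi-\tfrac{\delta}{2}\norm{\cdot}^2$ is $(\gamma-\delta)$-smooth does not follow from a triangle inequality on gradients (that would give $\gamma+\delta$); it follows because both $\psi$ and $\tfrac{\gamma-\delta}{2}\norm{\cdot}^2-\psi=\tfrac{\gamma}{2}\norm{\cdot}^2-\phi$ are convex, which sandwiches the curvature of $\psi$ between $0$ and $\gamma-\delta$; (ii) you need $1-\eta\delta\ge0$ so that the cocoercivity lower bound on the cross term enters with the right sign, which holds since $\eta\le\tfrac{2}{\gamma+\delta}\le\tfrac{1}{\delta}$.
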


\paragraph{Step 1: Bounding the Decrease of $\norm{x_{t} - x^*}$ via a One-Step ``Ghost" Algorithm.\footnote{$\norm{x_{t} - x^*}$ may not decrease as $t$ increases. 
Here what we mean is to upper bound $\norm{x_{t+1}-x^*}$ using $\norm{x_{t} - x^*}$ and an error term.}}
Our technique is to consider the following one-step ``ghost" algorithm for the primal variable, which corresponds to a gradient descent step for the primal problem~\eqref{eqn:primal_problem}.
We define an auxiliary variable $\tilde x_{t+1}$: given $x_t$, let
\begin{align}\label{eqn:primal-gd}
	\tilde x_{t+1} :=  x_t - \eta_1 \left( \nabla f( x_t) + \nabla h(x_t) \right).
\end{align}
where $h(x) := g^*(Ax)$.
Note that $\tilde x_{t+1}$ is defined only for the purpose of the proof.
Our main idea is to use this ``ghost" algorithm as a reference and bound the distance between the primal-dual gradient iterate $x_{t+1}$ and this ``ghost'' variable $\tilde x_{t+1}$.
We first prove with this ``ghost" algorithm, the distance between the primal variable and the optimum $x^*$ decreases at a geometric rate.

\begin{prop}\label{prop:ghost-linear-converge}	
If $\eta_1 \le \frac{2}{\rho + \sigma_{\max}^2/\alpha + \sigma_{\min}^2/\beta}$, then \[
\norm{\tilde{x}_{t+1}-x^*} \le \left(1-\frac{\sigma_{\min}^2}{\beta}\eta_1\right)\norm{x_{t}-x^*}.
\]
\end{prop}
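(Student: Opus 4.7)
The plan is to recognize that the ghost update~\eqref{eqn:primal-gd} is exactly one step of gradient descent applied to the primal objective
\[
P(x) = f(x) + h(x), \qquad h(x) := g^*(Ax),
\]
and then invoke Lemma~\ref{lem:gd-converge}. So the work reduces to (a) identifying $x^*$ as the minimizer of $P$, (b) computing a smoothness constant for $P$, and (c) computing a strong convexity constant for $P$; the stated step size restriction is precisely $\eta_1 \le 2/(\gamma+\delta)$ for the resulting $\gamma,\delta$.

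For (a), I would use the first-order optimality conditions~\eqref{eqn:opt-condition}: from $Ax^* = \nabla g(y^*)$ and Fact~\ref{fact:conjugate}(ii), $y^* = \nabla g^*(Ax^*)$, and then $\nabla P(x^*) = \nabla f(x^*) + A^\top \nabla g^*(Ax^*) = \nabla f(x^*) + A^\top y^* = 0$. For (b), $f$ is $\rho$-smooth by Assumption~\ref{asmp:f}, while $\nabla h(x) = A^\top \nabla g^*(Ax)$ and Fact~\ref{fact:conjugate}(i) gives $\nabla g^*$ the Lipschitz constant $1/\alpha$, so
\[
\norm{\nabla h(u) - \nabla h(v)} \le \sigma_{\max} \cdot \tfrac{1}{\alpha} \cdot \sigma_{\max}\norm{u-v} = \tfrac{\sigma_{\max}^2}{\alpha}\norm{u-v},
\]
and $P$ is $(\rho + \sigma_{\max}^2/\alpha)$-smooth. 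For (c), $f$ is convex, and Fact~\ref{fact:conjugate}(i) says $g^*$ is $1/\beta$-strongly convex, so
\[
\langle \nabla h(u) - \nabla h(v), u-v\rangle = \langle \nabla g^*(Au) - \nabla g^*(Av), Au-Av\rangle \ge \tfrac{1}{\beta}\norm{Au-Av}^2 \ge \tfrac{\sigma_{\min}^2}{\beta}\norm{u-v}^2,
\]
where the last step uses Assumption~\ref{asmp:A} (full column rank), which guarantees $\norm{Aw}\ge \sigma_{\min}\norm{w}$. Hence $h$ is $\sigma_{\min}^2/\beta$-strongly convex, and so is $P$.

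Combining, $P$ is $\gamma$-smooth and $\delta$-strongly convex with $\gamma = \rho + \sigma_{\max}^2/\alpha$ and $\delta = \sigma_{\min}^2/\beta$. Since $\eta_1 \le 2/(\gamma+\delta)$ by hypothesis, Lemma~\ref{lem:gd-converge} applied to $\phi = P$ with $\bar x = x^*$ and the GD step $\tilde x_{t+1} = x_t - \eta_1 \nabla P(x_t)$ yields
\[
\norm{\tilde x_{t+1} - x^*} \le (1 - \delta \eta_1)\norm{x_t - x^*} = \left(1 - \tfrac{\sigma_{\min}^2}{\beta}\eta_1\right)\norm{x_t - x^*},
\]
as desired. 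There is no genuinely hard step here: the content is really the bookkeeping of how $A$'s extremal singular values interact with $g$'s smoothness/strong convexity under conjugation, and the only place one must be careful is the strong convexity bound for $h$, where using $\sigma_{\min}^2$ crucially requires that $A$ has full column rank so that $A^\top A \succeq \sigma_{\min}^2 I_{d_1}$ on all of $\R^{d_1}$.
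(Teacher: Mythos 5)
Your proposal is correct and follows essentially the same route as the paper: identify the ghost step as gradient descent on $P(x)=f(x)+g^*(Ax)$, establish $(\rho+\sigma_{\max}^2/\alpha)$-smoothness and $\sigma_{\min}^2/\beta$-strong convexity of $P$ via Fact~\ref{fact:conjugate} and the full column rank of $A$, and invoke Lemma~\ref{lem:gd-converge}. The only cosmetic differences are that you verify $\nabla P(x^*)=0$ explicitly (the paper leaves this implicit) and you certify strong convexity of $h$ through gradient monotonicity rather than the function-value inequality, which are equivalent characterizations.
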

\begin{proof}
	Since \eqref{eqn:primal-gd} is a gradient descent step for the primal problem~\eqref{eqn:primal_problem} whose objective is $P(x) = h(x)+f(x)$ where $h(x) = g^*(Ax)$, it suffices to show that $P$ is smooth and strongly convex in order to apply Lemma~\ref{lem:gd-converge}.
	Note that $g^*$ is $\frac{1}{\alpha}$-smooth and $\frac{1}{\beta}$-strongly convex according to Fact~\ref{fact:conjugate}.
	
	We have $\nabla h(x) = A^\top \nabla g^*(Ax)$. Then for any $x, x' \in \R^d$ we have
	\begin{align*}
	&\norm{\nabla P(x) - \nabla P(x')} \\
	\le\, & \norm{\nabla f(x) - \nabla f(x')} + \norm{A^\top \nabla g^*(Ax) - A^\top \nabla g^*(Ax')} \\
	\le\, & \rho \norm{x-x'} + \sigma_{\max} \norm{\nabla g^*(Ax) - \nabla g^*(Ax')} \\
	\le\, & \rho \norm{x-x'} + \frac{\sigma_{\max}}{\alpha} \norm{Ax -  Ax'} \\
	\le\, & \rho \norm{x-x'} + \frac{\sigma_{\max}^2}{\alpha} \norm{x -  x'} \\
	=\, & (\rho + \sigma_{\max}^2/\alpha) \norm{x-x'},
	\end{align*}
	where we have used the $\rho$-smoothness of $f$, the $\frac{1}{\alpha}$-smoothness of $g^*$, and the bound on $\sigma_{\max}(A)$.
	Therefore $P$ is $(\rho + \sigma_{\max}^2/\alpha)$-smooth.
	
	On the other hand, for any $x, x' \in \R^d$ we have
	\begin{align*}
	&P(x') - P(x) \\
	=\, & f(x') - f(x) + g^*(Ax') - g^*(Ax) \\
	\ge\, & \langle \nabla f(x), x'-x \rangle + \langle \nabla g^*(Ax), Ax'-Ax \rangle \\ &+ \frac{1/\beta}{2} \norm{Ax'-Ax}^2 \\
	=\, & \langle \nabla f(x) + A^\top \nabla g^*(Ax), x'-x \rangle + \frac{1}{2\beta} \norm{Ax'-Ax}^2 \\
	\ge\, & \langle \nabla P(x) , x'-x \rangle + \frac{1}{2\beta} \sigma_{\min}^2 \norm{x'-x}^2,
	\end{align*}
	where we have used the convexity of $f$, the $\frac1\beta$-strong convexity of $g^*$, and that $A$ has full column rank.
	Therefore $P$ is $\sigma_{\min}^2/\beta$-strongly convex.
	
	With the smoothness and the strong convexity of $P$,
	the proof is completed by applying Lemma~\ref{lem:gd-converge}.
\end{proof}

Proposition~\ref{prop:ghost-linear-converge} suggests that if we use the ``ghost" algorithm \eqref{eqn:primal-gd}, we have the desired linear convergence property.
The following proposition gives an upper bound on $\norm{x_{t+1} - x^*}$ by bounding
the distance between $x_{t+1}$ and $\tilde{x}_{t+1}$.

\begin{prop} \label{prop:primal}
	If $\eta_1 \le \frac{2}{\rho + \sigma_{\max}^2/\alpha + \sigma_{\min}^2/\beta}$, 
	then
	\begin{equation} \label{eqn:primal-decrease}
	\begin{aligned}
	\left\| x_{t+1} - x^* \right\|
	\le & \left( 1-   \frac{\sigma_{\min}^2}{\beta}\eta_1 \right) \left\| x_t - x^* \right\| \\&+ 
	\sigma_{\max} \eta_1 \left\|  y_t - \nabla g^* (Ax_t)  \right\|.
	\end{aligned}
	\end{equation}
\end{prop}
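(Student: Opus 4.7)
The plan is to bound $\|x_{t+1}-x^*\|$ by introducing the ghost iterate $\tilde x_{t+1}$ defined in \eqref{eqn:primal-gd} and applying the triangle inequality
\[
\|x_{t+1}-x^*\| \;\le\; \|\tilde x_{t+1} - x^*\| + \|x_{t+1} - \tilde x_{t+1}\|.
\]
The first term on the right is precisely what Proposition~\ref{prop:ghost-linear-converge} controls: under the stated step-size condition $\eta_1 \le \frac{2}{\rho + \sigma_{\max}^2/\alpha + \sigma_{\min}^2/\beta}$, it is at most $\bigl(1 - \frac{\sigma_{\min}^2}{\beta}\eta_1\bigr)\|x_t - x^*\|$, giving us the contraction factor appearing in \eqref{eqn:primal-decrease}.

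For the second term, the key observation is that the real update and the ghost update differ only in their ``dual'' component: the real update uses $A^\top y_t$, while the ghost update uses $A^\top \nabla g^*(Ax_t)$ in place of $A^\top y_t$. Subtracting the two gives
\[
x_{t+1} - \tilde x_{t+1} \;=\; -\eta_1 A^\top \bigl(y_t - \nabla g^*(Ax_t)\bigr),
\]
so by submultiplicativity of the spectral norm,
\[
\|x_{t+1} - \tilde x_{t+1}\| \;\le\; \eta_1\, \sigma_{\max}\, \bigl\|y_t - \nabla g^*(Ax_t)\bigr\|.
\]
This is exactly the error term in \eqref{eqn:primal-decrease}. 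Combining the two bounds yields the claim.

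There is essentially no hard step here: once the ghost iterate is defined, Proposition~\ref{prop:ghost-linear-converge} does the heavy lifting, and the remainder is a single triangle-inequality/spectral-norm calculation. The real conceptual content is the choice of $\tilde x_{t+1}$ itself — it is designed so that the residual $y_t - \nabla g^*(Ax_t)$, which coincides with the quantity $b_t$ in Theorem~\ref{thm:main}, becomes the only obstruction to treating the primal-dual iteration as gradient descent on $P(x)$. The subsequent step of the overall proof (bounding how $b_t$ shrinks) is where the real work lies, but for Proposition~\ref{prop:primal} itself the argument is a short two-line decomposition.
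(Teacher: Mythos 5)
Your proof is correct and is essentially identical to the paper's own argument: the same split via the ghost iterate $\tilde x_{t+1}$, Proposition~\ref{prop:ghost-linear-converge} for the contraction term, and the identity $x_{t+1}-\tilde x_{t+1}=-\eta_1 A^\top\bigl(y_t-\nabla g^*(Ax_t)\bigr)$ followed by the spectral-norm bound for the error term. Nothing to add.
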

\begin{proof}
	We have $\tilde x_{t+1} - x_{t+1} = \eta_1 A^\top (y_t - \nabla g^*(A x_t))$, which implies
	\[
	\norm{ \tilde x_{t+1} - x_{t+1} } \le \eta_1 \sigma_{\max} \norm{y_t - \nabla g^*(A x_t)}.
	\]
	Then the proposition follows by applying the triangle inequality and Proposition~\ref{prop:ghost-linear-converge}.
\end{proof}


\paragraph{Step 2: Bounding the Decrease of $\norm{y_t - \nabla g^*(A x_t)}$.}
One may want to show the decrease of $\norm{y_t - y^*}$ similarly using a ``ghost'' update for the dual variable.
However, the objective function in the dual problem $\max_y \left\{ -g(y)-f^*(-A^\top y) \right\}$ might be non-smooth, which means we cannot obtain a result similar to Proposition~\ref{prop:ghost-linear-converge}.
Instead, we show that $\norm{y_t - \nabla g^*(A x_t)}$ decreases geometrically up to an error term.


\begin{prop}\label{prop:xt_xt1}
	We have
	\begin{align*}
	\norm{x_{t+1}-x_t} \le& \left( \rho + \frac{\sigma_{\max}^2}{\alpha} \right) \eta_1 \norm{x_t-x^*} \\ &+ \sigma_{\max}\eta_1 \norm{y_t - \nabla g^*(Ax_t)}.
	\end{align*}
\end{prop}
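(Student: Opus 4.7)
The plan is to directly expand $\|x_{t+1}-x_t\|$ using the update rule, and then split the primal gradient into a ``ghost'' gradient (the gradient of $P$) plus a correction term that measures how far $y_t$ is from the idealized value $\nabla g^*(Ax_t)$.

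First I would note that the update in Algorithm~\ref{algo:pdg} gives
\[
x_{t+1}-x_t = -\eta_1 \bigl(\nabla f(x_t)+A^\top y_t\bigr),
\]
so $\|x_{t+1}-x_t\| = \eta_1\|\nabla f(x_t)+A^\top y_t\|$. The key algebraic trick is to add and subtract $A^\top \nabla g^*(Ax_t)$ inside the norm, yielding
\[
\nabla f(x_t)+A^\top y_t
= \nabla P(x_t) + A^\top\bigl(y_t-\nabla g^*(Ax_t)\bigr),
\]
where $P(x) = f(x)+g^*(Ax)$ is the primal objective from~\eqref{eqn:primal_problem}. By the triangle inequality and $\|A^\top v\|\le \sigma_{\max}\|v\|$, this gives
\[
\|x_{t+1}-x_t\| \le \eta_1\|\nabla P(x_t)\| + \eta_1\sigma_{\max}\|y_t-\nabla g^*(Ax_t)\|.
\]

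Next I would bound $\|\nabla P(x_t)\|$ in terms of $\|x_t-x^*\|$. The optimality condition~\eqref{eqn:opt-condition} combined with $y^* = \nabla g^*(Ax^*)$ (noted right after Theorem~\ref{thm:main}) gives $\nabla P(x^*) = \nabla f(x^*) + A^\top \nabla g^*(Ax^*) = \nabla f(x^*) + A^\top y^* = 0$. Then the smoothness calculation already carried out inside the proof of Proposition~\ref{prop:ghost-linear-converge} shows that $P$ is $(\rho+\sigma_{\max}^2/\alpha)$-smooth, so
\[
\|\nabla P(x_t)\| = \|\nabla P(x_t)-\nabla P(x^*)\| \le \Bigl(\rho+\tfrac{\sigma_{\max}^2}{\alpha}\Bigr)\|x_t-x^*\|.
\]
Substituting this bound into the previous display yields exactly the claimed inequality.

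There is no real obstacle here; the whole proposition amounts to (i) the decomposition of the primal-dual gradient into a ``primal-like'' piece plus a dual-error piece, and (ii) reusing the smoothness estimate of $P$ that was already established for Proposition~\ref{prop:ghost-linear-converge}. The only point to be a little careful about is remembering that $\nabla P(x^*)=0$, which follows from combining the two optimality equations in~\eqref{eqn:opt-condition} via Fact~\ref{fact:conjugate}(ii).
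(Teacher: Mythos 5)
Your proposal is correct and matches the paper's own argument essentially line for line: the same decomposition $\nabla f(x_t)+A^\top y_t = \nabla P(x_t) + A^\top(y_t-\nabla g^*(Ax_t))$, the same use of $\|A^\top v\|\le\sigma_{\max}\|v\|$, and the same appeal to $\nabla P(x^*)=0$ together with the $(\rho+\sigma_{\max}^2/\alpha)$-smoothness of $P$ already established in Proposition~\ref{prop:ghost-linear-converge}. No gaps.
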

\begin{proof}
	Using the gradient update formula of the primal variable, we have
	\begin{equation} \label{eqn:x_t-x_{t+1}}
	\begin{aligned}
	&\frac{1}{\eta_1}\|x_{t+1} - x_t\|  
	=  \left\| \nabla f(x_{t}) + A^\top y_t \right\| \\
	\le\,&     \left\| \nabla f(x_{t}) + A^\top \nabla g^*(Ax_t) \right\| + \left\| A^\top (y_t - \nabla g^*(Ax_t)  ) \right\|   \\
	\le\, &  \left\| \nabla f(x_{t}) + A^\top \nabla g^*(Ax_t) \right\| + \sigma_{\max} \norm{y_t - \nabla g^*(Ax_t)} . 
	\end{aligned}
	\end{equation}
	
	Recall that the primal objective function $P(x) = f(x) + g^*(Ax)$ is $(\rho+\sigma_{\max}^2/\alpha)$-smooth (see the proof of Proposition~\ref{prop:ghost-linear-converge}).
	So we have
	\begin{align*}
	&\norm{\nabla f(x_{t}) + A^\top \nabla g^* (Ax_t)}
	= \norm{\nabla P(x_t)} \\
	=\,& \norm{\nabla P(x_t) - \nabla P(x^*)}
	\le (\rho+\sigma_{\max}^2/\alpha) \norm{x_t-x^*}.
	\end{align*}
	Plugging this back to~\eqref{eqn:x_t-x_{t+1}} we obtain the desired result.
\end{proof}

\begin{prop} \label{prop:dual}
	If $\eta_2 \le \frac{2}{\alpha+\beta}$, then
	\begin{align*}
	&\left\| y_{t+1} - \nabla g^*(Ax_{t+1}) \right\| \\
	\le\, & \left(1-\alpha\eta_2 + \frac{\sigma^2_{\max}}{\alpha} \eta_1\right) \norm{ y_{t} - \nabla g^*(Ax_{t}) } \\ 
	&+ \frac{\sigma_{\max}}{\alpha} \left( \rho + \frac{\sigma_{\max}^2}{\alpha} \right) \eta_1 \norm{x_t-x^*}.
	\end{align*}
\end{prop}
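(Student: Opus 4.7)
The plan is to control $\| y_{t+1} - \nabla g^*(Ax_{t+1}) \|$ by inserting the intermediate point $\nabla g^*(Ax_t)$ via the triangle inequality:
\[
\| y_{t+1} - \nabla g^*(Ax_{t+1}) \|
\le \| y_{t+1} - \nabla g^*(Ax_t) \| + \| \nabla g^*(Ax_t) - \nabla g^*(Ax_{t+1}) \|.
\]
The first term measures how far the dual update moves toward the ``current'' target $\nabla g^*(Ax_t)$, while the second term accounts for the drift of this target caused by the primal update from $x_t$ to $x_{t+1}$. This is the dual analogue of the ghost-step idea used in Proposition~\ref{prop:primal}.

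For the first term, the key observation is that fixing $x = x_t$ and viewing the dual update as a step on the auxiliary function $\psi(y) := g(y) - \langle y, Ax_t\rangle$ gives exactly $y_{t+1} = y_t - \eta_2 \nabla\psi(y_t)$. Since $g$ is $\alpha$-strongly convex and $\beta$-smooth, so is $\psi$, and its unique minimizer satisfies $\nabla g(y) = Ax_t$, i.e., equals $\nabla g^*(Ax_t)$ by Fact~\ref{fact:conjugate}(ii). Applying Lemma~\ref{lem:gd-converge} with the stated choice $\eta_2 \le \frac{2}{\alpha+\beta}$ then yields
\[
\| y_{t+1} - \nabla g^*(Ax_t) \| \le (1 - \alpha \eta_2)\, \| y_t - \nabla g^*(Ax_t) \|.
\]

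For the second term, I would use that $g^*$ is $\frac{1}{\alpha}$-smooth (Fact~\ref{fact:conjugate}(i)) together with $\|A(x_{t+1}-x_t)\| \le \sigma_{\max}\|x_{t+1}-x_t\|$, obtaining
\[
\| \nabla g^*(Ax_t) - \nabla g^*(Ax_{t+1}) \| \le \frac{\sigma_{\max}}{\alpha}\,\|x_{t+1}-x_t\|,
\]
and then substitute the bound on $\|x_{t+1}-x_t\|$ from Proposition~\ref{prop:xt_xt1}. Adding the two contributions and collecting the coefficient of $\|y_t - \nabla g^*(Ax_t)\|$ gives precisely $\left(1-\alpha\eta_2 + \frac{\sigma_{\max}^2}{\alpha}\eta_1\right)$, while the coefficient of $\|x_t - x^*\|$ becomes $\frac{\sigma_{\max}}{\alpha}\bigl(\rho + \frac{\sigma_{\max}^2}{\alpha}\bigr)\eta_1$, as claimed.

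I do not expect a real obstacle here; the only conceptually nontrivial step is identifying the ``moving target'' $\nabla g^*(Ax_t)$ and realizing that the dual update is an honest gradient descent step on the $\alpha$-strongly convex function $\psi$. Once this is set up, everything else is triangle inequality, smoothness of $g^*$, and the already-proved Proposition~\ref{prop:xt_xt1}.
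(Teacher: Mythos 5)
Your proposal is correct and follows essentially the same route as the paper's proof: identify $\nabla g^*(Ax_t)$ as the minimizer of the $\alpha$-strongly convex, $\beta$-smooth function $y \mapsto g(y) - \langle y, Ax_t\rangle$, apply Lemma~\ref{lem:gd-converge}, and then account for the moving target via the triangle inequality, the $\frac{1}{\alpha}$-smoothness of $g^*$, and Proposition~\ref{prop:xt_xt1}. The coefficient bookkeeping checks out exactly as you describe.
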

\begin{proof}
	For fixed $x_t$, the update rule $y_{t+1} = y_t - \eta_2 (\nabla g(y_t) - Ax_t)$ is a gradient descent step for the objective function $\tilde g(y) := g(y) - y^\top Ax_t$ which is also $\beta$-smooth and $\alpha$-strongly convex. By the optimality condition, the minimizer $\tilde{y}^* = \argmin_{y\in\R^d} \tilde g(y)$ satisfies $\nabla g(\tilde y^*) = Ax_t$, i.e., $\tilde y^* = \nabla g^*(Ax_t)$.
	Then from Lemma~\ref{lem:gd-converge} we know that
	\begin{equation} \label{eqn:dual-decrease-1}
	\norm{y_{t+1} - \nabla g^*(Ax_t)} \le (1-\alpha\eta_2) \norm{y_t - \nabla g^*(Ax_t)}.
	\end{equation}
	
	Since we want to upper bound $\norm{y_{t+1} - \nabla g^*(Ax_{t+1})}$, we need to take into account the difference between $x_{t+1}$ and $x_t$.
	We prove an upper bound on $\norm{x_{t+1}-x_t}$ in Proposition~\ref{prop:xt_xt1}.
	Using Proposition~\ref{prop:xt_xt1} and \eqref{eqn:dual-decrease-1}, we have
	\begin{align*}
		&\norm{ y_{t+1} - \nabla g^*(Ax_{t+1}) } \\
		\le\, & \norm{ y_{t+1} - \nabla g^*(Ax_{t}) } + \norm{\nabla g^*(Ax_{t+1}) - \nabla g^*(Ax_{t})} \\
		\le\, & \norm{ y_{t+1} - \nabla g^*(Ax_{t}) } + \frac{\sigma_{\max}}{\alpha} \norm{ x_{t+1} - x_{t}} \\
		\le\, & (1-\alpha\eta_2) \norm{ y_{t} - \nabla g^*(Ax_{t}) } \\ 
		&+ \frac{\sigma_{\max}}{\alpha} \left( \rho + \frac{\sigma_{\max}^2}{\alpha} \right) \eta_1 \norm{x_t-x^*} \\
		&+ \frac{\sigma^2_{\max}}{\alpha} \eta_1 \norm{ y_{t} - \nabla g^*(Ax_{t}) }. \qedhere
	\end{align*}
\end{proof}


Note that the upper bound on $\norm{x_{t+1}-x_t}$ given in 
Proposition~\ref{prop:xt_xt1}  is proportional to $\eta_1$, not to $\eta_2$.
This allows us to choose a relatively small $\eta_1$ to ensure that the factor $1-\alpha\eta_2 + \frac{\sigma_{\max}^2}{\alpha}\eta_1$ in Proposition~\ref{prop:dual} is indeed less than $1$, i.e., $\norm{y_t - \nabla g^*(Ax_t)}$ is approximately decreasing.

\paragraph{Step 3: Putting Things Together.} 
Now we are ready to finish the proof of Theorem~\ref{thm:main}.
From Propositions~\ref{prop:primal} and~\ref{prop:dual} we have
\begin{equation} \label{eqn:main-ineq-1}
a_{t+1}
\le \left( 1-  \frac{\sigma_{\min}^2}{\beta} \eta_1 \right) a_t + \sigma_{\max} \eta_1 b_t,
\end{equation}
\begin{equation} \label{eqn:main-ineq-2}
\begin{aligned}
b_{t+1} \le\, &\frac{\sigma_{\max}}{\alpha} \left( \rho + \frac{\sigma_{\max}^2}{\alpha} \right) \eta_1 a_t  \\
&+\left( 1-\alpha \eta_2 + \frac{\sigma_{\max}^2}{\alpha}\eta_1 \right) b_t .
\end{aligned}
\end{equation}

To prove the convergence of sequences $\left\{a_t\right\}$ and $\left\{b_t\right\}$ to $0$, we consider a linear combination $P_t = \lambda a_t + b_t$ with a free parameter $\lambda>0$ to be determined.
Combining \eqref{eqn:main-ineq-1} and \eqref{eqn:main-ineq-2},
with some routine calculations, we can show that our choices of $\lambda$, $\eta_1$ and $\eta_2$ given in Theorem~\ref{thm:main} can ensure $P_{t+1} \le cP_t$ for some $0<c<1$, as desired. 
We give the remaining details in Appendix~\ref{sec:proof-main}. 
	

\section{Extension to Primal-Dual SVRG}
\label{sec:svrg}

In this section we consider the case where the saddle point problem \eqref{eqn:basic_primal_dual} admits a finite-sum structure:\footnote{For ease of presentation we assume $f$, $g$ and $K$ can be split into $n$ terms. 
	It is not hard to generalize our analysis to the case where $f$, $g$ and $A$ can be split into different numbers of terms.}
\begin{align} \label{eqn:finite_sum}
\min_{\vect{x}\in \R^{d_1}} \max_{y\in \R^{d_2}}L(\vect{x},\vect{y}) = \frac{1}{n} \sum_{i=1}^n L_i(x, y),
\end{align}
where
$L_i(x, y) :=  f_i(\vect{x}) + \vect{y}^\top A_i \vect{x}  - g_i(\vect{y})$.
Optimization problems with finite-sum structure are ubiquitous in machine learning, because loss functions can often be written as a sum of individual loss terms corresponding to individual observations.

In this section, we make the following assumptions:
\begin{asmp}\label{asmp:f-svrg}
	Each $f_i$ is $\rho$-smooth ($\rho\ge0$), and $f = \frac1n \sum_{i=1}^n f_i$ is convex.
\end{asmp}
\begin{asmp}\label{asmp:g-svrg}
	Each $g_i$ is $\beta$-smooth, and $g = \frac1n \sum_{i=1}^n g_i$ is $\alpha$-strongly convex ($\beta \ge \alpha >0$). 
\end{asmp}
\begin{asmp}\label{asmp:A-svrg}
	Each $A_i$ satisfies $\sigma_{\max}(A_i) \le M$, and $A = \frac1n \sum_{i=1}^n A_i$ has rank $d_1$.
\end{asmp}

Note that we only require component functions $f_i$ and $g_i$ to be smooth; they are not necessarily convex.
However, the overall objective function $L(x, y) = f(x) +y^\top Ax - g(y)$ still has to satisfy Assumptions~\ref{asmp:f}-\ref{asmp:A}.


Given the finite-sum structure \eqref{eqn:finite_sum}, we denote the individual gradient of each $L_i$ as
\begin{align*}
B_i\left(x,y\right) :=
\begin{bmatrix}
\nabla_xL_i(x, y)  \\
\nabla_yL_i(x, y) 
\end{bmatrix} =
\begin{bmatrix}
\nabla f_i(x) + A_i^\top y\\
A_ix - \nabla g_i\left(y\right)
\end{bmatrix}, 
\end{align*} and the full gradient of $L$ as
\begin{align*}
B\left(x,y\right) := 
\frac1n \sum_{i=1}^n B_i(x, y)=
\begin{bmatrix}
\frac{1}{n}\sum_{i=1}^{n}\left(\nabla f_i(x) + A_i^\top y\right)\\
\frac{1}{n}\sum_{i=1}^{n}\left(A_ix - \nabla g_i\left(y\right)\right)
\end{bmatrix}.
\end{align*}
A naive computation of $A_ix$ or $A_i^\top y$ takes $O(d_1d_2)$ time.
However, in many applications like policy evaluation~\citep{du2017stochastic} and empirical risk minimization, each $A_i$ is given as the outer product of two vectors (i.e., a rank-$1$ matrix), which makes $A_ix$ and $A_i^\top y$ computable in only $O(d)$ time, where $d = \max\left\{d_1,d_2\right\}$.
In this case, computing an individual gradient $B_i(x,y)$ takes $O(d)$ time while computing the full gradient $B(x,y)$ takes $O(nd)$ time.

We adapt the stochastic variance reduced gradient (SVRG) method~\citep{johnson2013accelerating} to solve Problem~\eqref{eqn:finite_sum}.
The algorithm uses two layers of loops.
In an outer loop, the algorithm first computes a full gradient using a ``snapshot'' point $(\tilde{x},\tilde{y})$, and
then the algorithm executes $N$ inner loops, where $N$ is a parameter to be chosen. In each inner loop, the algorithm randomly samples an index $i$ from $[n]$ and updates the current iterate $(x,y)$ using a variance-reduced stochastic gradient: 
\begin{align}
	B_{i}(x,y,\tilde{x},\tilde{y}) = B_{i}(x,y) + B(\tilde{x},\tilde{y}) - B_{i}(\tilde{x},\tilde{y}).
	\label{Equ:SVRG:ReducedVarGrad}
	\end{align}
Here, $B_{i}(x,y)$ is the stochastic gradient at $(x,y)$ 
computed using the random index $i$, and
$B(\tilde{x},\tilde{y}) - B_{i}(\tilde{x},\tilde{y})$ is a term
used to reduce the variance in $B_{i}(x,y)$ while keeping
$B_{i}(x,y,\tilde{x},\tilde{y})$ 
an unbiased estimate of $B(x,y)$. 
The full details of the algorithm are provided in Algorithm~\ref{algo:SVRG_saddle}.
For clarity, we denote by $(\tilde x_t, \tilde y_t)$ the snapshot point in the $t$-th epoch (outer loop), and denote by $(x_{t, 0}, y_{t, 0}), (x_{t, 1}, y_{t, 1}), \ldots$ all the intermediate iterates within this epoch.

The following theorem establishes the linear convergence guarantee of Algorithm~\ref{algo:SVRG_saddle}.

\begin{algorithm}[tb]
	\renewcommand{\algorithmicrequire}{\textbf{Inputs:}}
	\renewcommand{\algorithmicensure}{\textbf{Outputs:}}
	\caption{Primal-Dual SVRG}
	\label{algo:SVRG_saddle}
	\begin{algorithmic}[1]
		\REQUIRE initial points $\tilde x_0 \in \R^{d_1}, \tilde y_0\in \R^{d_2}$, 
		step sizes $\eta_1, \eta_2>0$,
		number of inner iterations $N \in \mathbb N$
		\FOR{$t = 0, 1, \ldots$}
		\STATE Compute $B(\tilde x_t, \tilde y_t)$
		\STATE  $(x_{t, 0}, y_{t, 0}) = (\tilde x_t, \tilde y_t)$
		\FOR{$j=0$ {\bfseries to} $N-1$}
		\STATE Sample an index $i_j$ uniformly from $[n]$
		\STATE Compute $B_{i_j}(x_{t, j},y_{t, j})$ and $B_{i_j}(\tilde x_t, \tilde y_t)$
		\STATE 
		 \quad $
		\begin{bmatrix}
		x_{t,j+1} \\
		y_{t,j+1}
		\end{bmatrix} $\\
		$ = \begin{bmatrix}
		x_{t,j} \\
		y_{t,j}
		\end{bmatrix}    -  \begin{bmatrix}
		\eta_1 I_{d_1} & 0 \\
		0 & -\eta_2 I_{d_2}
		\end{bmatrix}   
		B_{i_j}(x_{t,j},y_{t,j},\tilde x_t,\tilde y_t),
		$ \\[0.5ex]
		where $B_{i_j}(x_{t,j},y_{t,j},\tilde x_t,\tilde y_t)$
		is defined in~\eqref{Equ:SVRG:ReducedVarGrad}
		\ENDFOR
		\STATE  $(\tilde x_{t+1},\tilde y_{t+1}) = (x_{t, j_t},y_{t,j_t})$, where $j_t$ is an index sampled uniformly from $\left\{ 0, 1, \ldots, N-1 \right\}$
		\ENDFOR
	\end{algorithmic}
\end{algorithm}

\begin{thm}\label{thm:svrg}
There exists a choice of parameters $\eta_1, \eta_2 = \poly\left( \beta, \rho, M, 1/\alpha, 1/\sigma_{\min}(A) \right)^{-1}$ and
$N = \poly\left( \beta, \rho, M, 1/\alpha, 1/\sigma_{\min}(A) \right)$ in Algorithm~\ref{algo:SVRG_saddle},
as well as another number  $\mu  = \poly\left( \beta, \rho, M, 1/\alpha, 1/\sigma_{\min}(A) \right)$,
such that if we define $Q_t = \expect{ \norm{\tilde x_{t} - x^*}^2 + \mu \norm{\tilde y_{t} - \nabla g^*(A\tilde x_{t})}^2 }$, then Algorithm~\ref{algo:SVRG_saddle} guarantees $Q_{t+1} \le \frac12 Q_t$ for all $t$.
\end{thm}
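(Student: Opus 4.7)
The plan is to mirror the ghost-sequence analysis of Theorem~\ref{thm:main} inside each inner epoch, but to work in squared-norm expectation because variance bounds for stochastic gradients naturally live in squared norm. Fix an outer epoch $t$ and write $x_j = x_{t,j}$, $y_j = y_{t,j}$, $\tilde x = \tilde x_t$, $\tilde y = \tilde y_t$ for brevity. Let $\mathcal{F}_j$ be the filtration generated by $i_0, \ldots, i_{j-1}$. For each inner step I would introduce, as in Section~\ref{sec:proof-main-thm}, the \emph{ghost} primal step at the current iterate,
\[
\hat x_{j+1} := x_j - \eta_1 \bigl( \nabla f(x_j) + A^\top \nabla g^*(Ax_j) \bigr),
\]
which, by Proposition~\ref{prop:ghost-linear-converge} applied to the smooth strongly convex primal $P$, contracts toward $x^*$ at rate $1 - \sigma_{\min}^2 \eta_1/\beta$ in norm, and hence at rate $1 - \Omega(\sigma_{\min}^2 \eta_1/\beta)$ in squared norm.

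Next I would decompose the actual SVRG update as
\[
x_{j+1} - x^* = (\hat x_{j+1} - x^*) - \eta_1 A^\top \bigl( y_j - \nabla g^*(Ax_j) \bigr) - \eta_1 \xi_j^x,
\]
where $\xi_j^x$ is the primal component of $B_{i_j}(x_j,y_j,\tilde x,\tilde y) - B(x_j,y_j)$, and similarly for the dual update around $\nabla g^*(Ax_j)$ using the analogue of Proposition~\ref{prop:dual}. Because the variance-reduced gradient \eqref{Equ:SVRG:ReducedVarGrad} is unbiased, $\expect{\xi_j^x \mid \mathcal{F}_j} = 0$, so squaring and taking conditional expectation kills the cross terms and leaves the squared deterministic bounds of Propositions~\ref{prop:primal} and~\ref{prop:dual} plus $\eta_1^2\, \expect{\norm{\xi_j^x}^2 \mid \mathcal F_j}$ and the corresponding dual variance. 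A standard SVRG computation, using the per-component smoothness of $f_i, g_i$ and the bound $\sigma_{\max}(A_i) \le M$, yields
\[
\expect{\norm{\xi_j^x}^2 + \norm{\xi_j^y}^2 \mid \mathcal F_j} \le C_1 \bigl( \norm{x_j - \tilde x}^2 + \norm{y_j - \tilde y}^2 \bigr)
\]
for a constant $C_1 = \poly(\rho,\beta,M)$.

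At this point I would introduce the squared potential $\Phi_j := \norm{x_j - x^*}^2 + \mu \norm{y_j - \nabla g^*(Ax_j)}^2$, and choose $\mu$, $\eta_1$, $\eta_2$ in the same way as $\lambda$, $\eta_1$, $\eta_2$ are chosen in Theorem~\ref{thm:main}, so that the deterministic part gives a recursion of the form
\[
\expect{\Phi_{j+1} \mid \mathcal F_j} \le (1 - c\eta_1)\, \Phi_j + C_2 \eta_1^2 \bigl( \norm{x_j - \tilde x}^2 + \norm{y_j - \tilde y}^2 \bigr)
\]
for constants $c, C_2$ depending polynomially on the problem parameters. The standard SVRG trick then bounds the error term by $O(\Phi_j + \Phi_0)$ after using $\tilde x = x_0$, $\tilde y = y_0$ and the triangle inequality (together with Fact~\ref{fact:conjugate} to absorb the $\nabla g^*$ shift). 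Summing over $j=0,\ldots,N-1$, dividing by $N$, and using that $(\tilde x_{t+1},\tilde y_{t+1})$ is a uniform sample from the inner iterates gives $\expect{\Phi_N^{\text{avg}}} \le \bigl(\tfrac{1}{c\eta_1 N} + C_3 \eta_1\bigr)\, \Phi_0$. Choosing $\eta_1$ small and $N$ large enough (both polynomially) makes this coefficient at most $1/2$, yielding $Q_{t+1} \le \tfrac{1}{2} Q_t$.

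The main obstacle I expect is the coupling between the primal and dual noises, since both updates are driven by the \emph{same} index $i_j$, so the cross term $\langle \xi_j^x, \xi_j^y\rangle$ does not vanish and must be absorbed jointly into the combined variance bound rather than handled separately. A secondary delicacy is that the dual potential uses $\nabla g^*(Ax_j)$, not $y^*$, so when bounding $\norm{y_{j+1} - \nabla g^*(Ax_{j+1})}^2$ I must carry the Proposition~\ref{prop:xt_xt1}-type term $\norm{x_{j+1} - x_j}^2$ in squared form; this brings in an extra $\eta_1^2$ factor that must be dominated by the $c\eta_1$ contraction, which forces the aforementioned smallness of $\eta_1$. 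Once these two points are handled, everything else is bookkeeping analogous to the deterministic proof combined with the classical SVRG epoch argument.
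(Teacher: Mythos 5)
Your proposal follows essentially the same route as the paper's proof: decompose each inner step into its conditional mean (to which the deterministic Propositions~\ref{prop:primal} and~\ref{prop:dual} apply, giving the ghost contraction) plus a zero-mean variance term, bound that variance SVRG-style, track the squared potential $\norm{x_j-x^*}^2+\mu\norm{y_j-\nabla g^*(Ax_j)}^2$, and close the epoch by summing over $j$ and using the uniform output sampling. The only cosmetic difference is that you bound the variance relative to the snapshot $(\tilde x,\tilde y)$ while the paper recenters at $(x^*,y^*)$, and you correctly flag the two genuine delicacies (the $\theta(x_{j+1})$ shift handled via the $\norm{x_{j+1}-x_j}^2$ term, and the joint primal--dual noise), which the paper resolves by the $A+B+2\sqrt{AB}\le (1+\eta_1)A+(1+1/\eta_1)B$ split.
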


Since computing a full gradient takes $O(nd)$ time and each inner loop takes $O(d)$ time,
each epoch takes $O(nd+Nd)$ time in total. 
Therefore, the total running time of Algorithm~\ref{algo:SVRG_saddle} is $O\left((n+N)d \log \frac1\epsilon \right)$
in order to reach an $\epsilon$-close solution, which is the desired running time of SVRG (note that $N$ does not depend on $n$).

The proof of Theorem~\ref{thm:svrg} is given in Appendix~\ref{sec:svrg_proof}. It relies on the same proof idea in Section~\ref{sec:pdbg} as well as the standard analysis technique for SVRG by~\cite{johnson2013accelerating}.

\section{Preliminary Empirical Evaluation}
\label{sec:exp}
\begin{figure*}[t!]
	\centering
	\begin{subfigure}[t]{0.3\textwidth}
		\includegraphics[width=\textwidth]{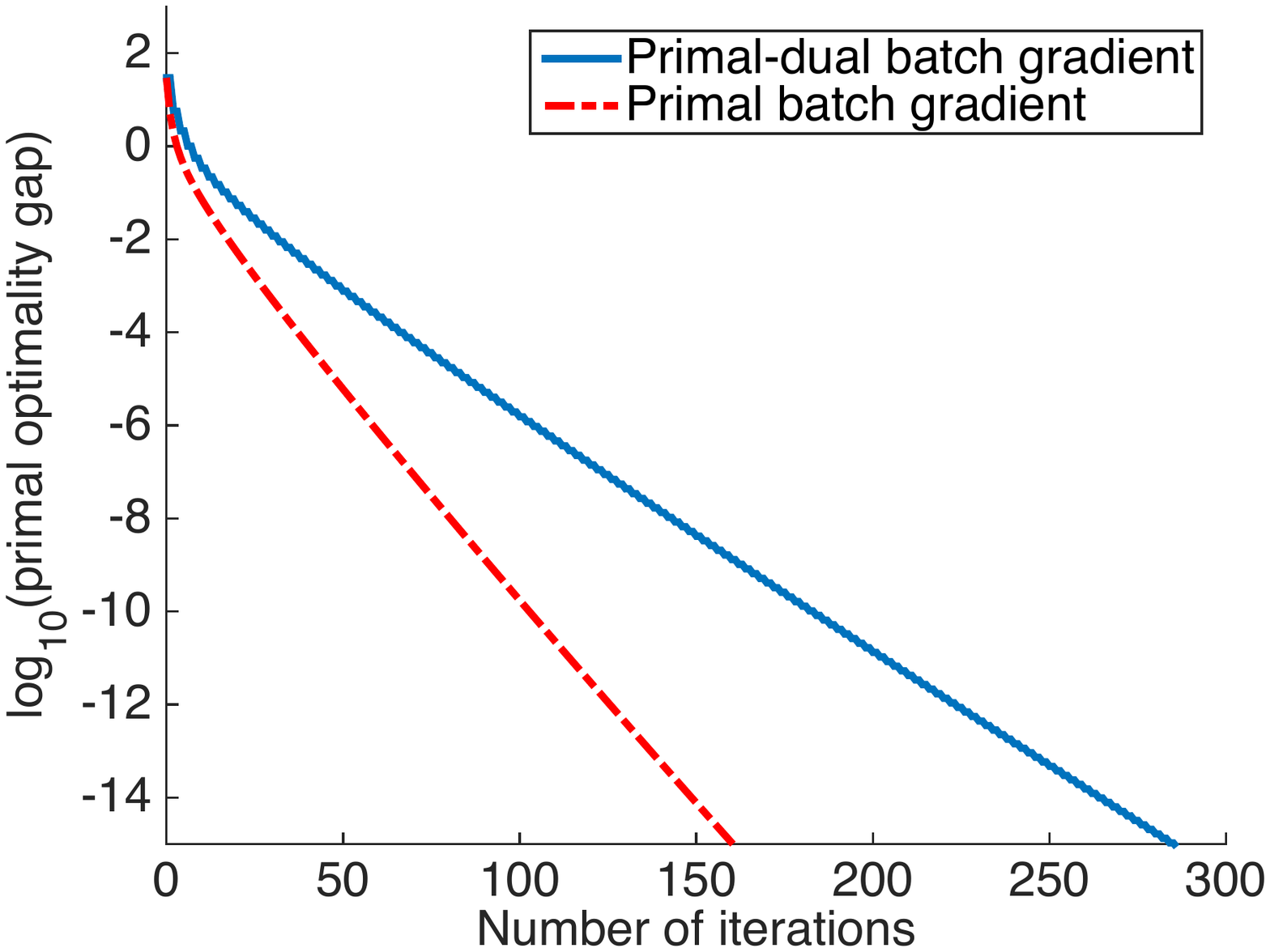}
		\caption{Data $\sim \cN(0, I_d)$}
	\end{subfigure}	
	\begin{subfigure}[t]{0.3\textwidth}
		\includegraphics[width=\textwidth]{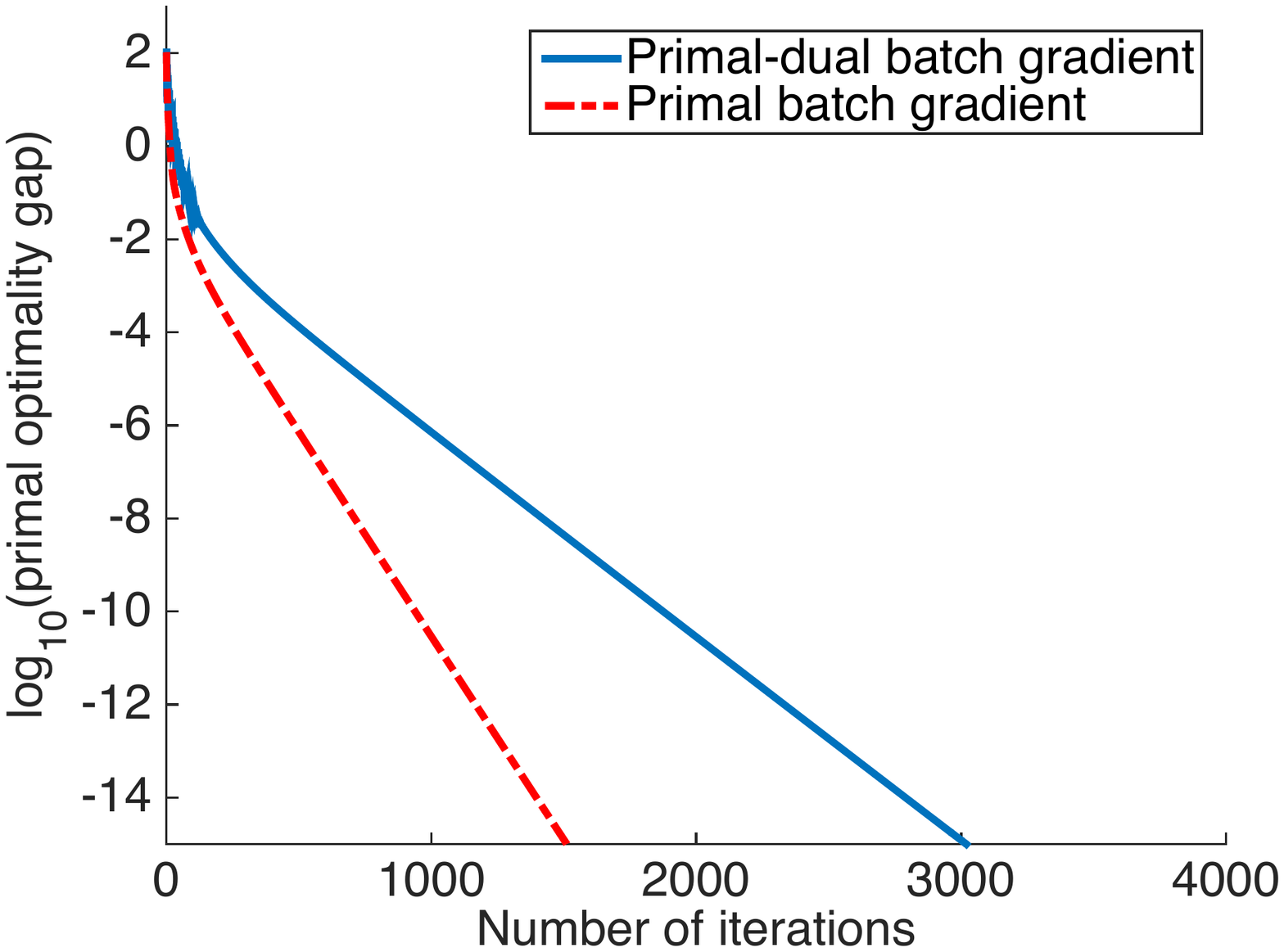}
		\caption{Data $\sim \cN(0, \Sigma)$, $\Sigma_{ij} = 2^{-|i-j|/2}$}
	\end{subfigure}
	\begin{subfigure}[t]{0.3\textwidth}
		\includegraphics[width=\textwidth]{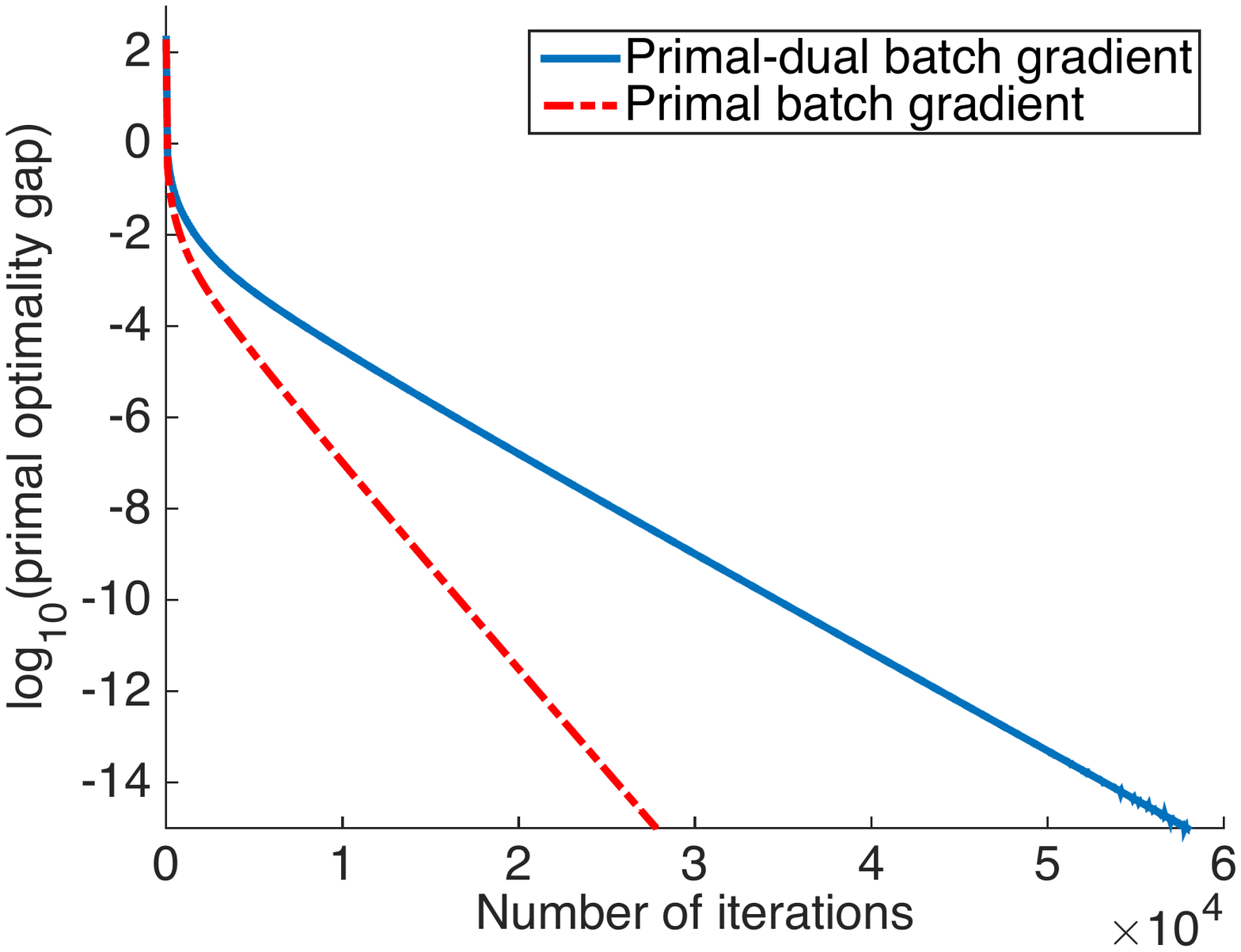}
		\caption{Data $\sim \cN(0, \Sigma)$, $\Sigma_{ij} = 2^{-|i-j|/10}$}
	\end{subfigure}
	\caption{Comparison of batch gradient methods for smoothed-$L_1$-regularized regression with $d = 200, n=500$.
	}
	\label{fig:batch}
\end{figure*}

\begin{figure*}[t!]
	\centering
	\begin{subfigure}[t]{0.3\textwidth}
		\includegraphics[width=\textwidth]{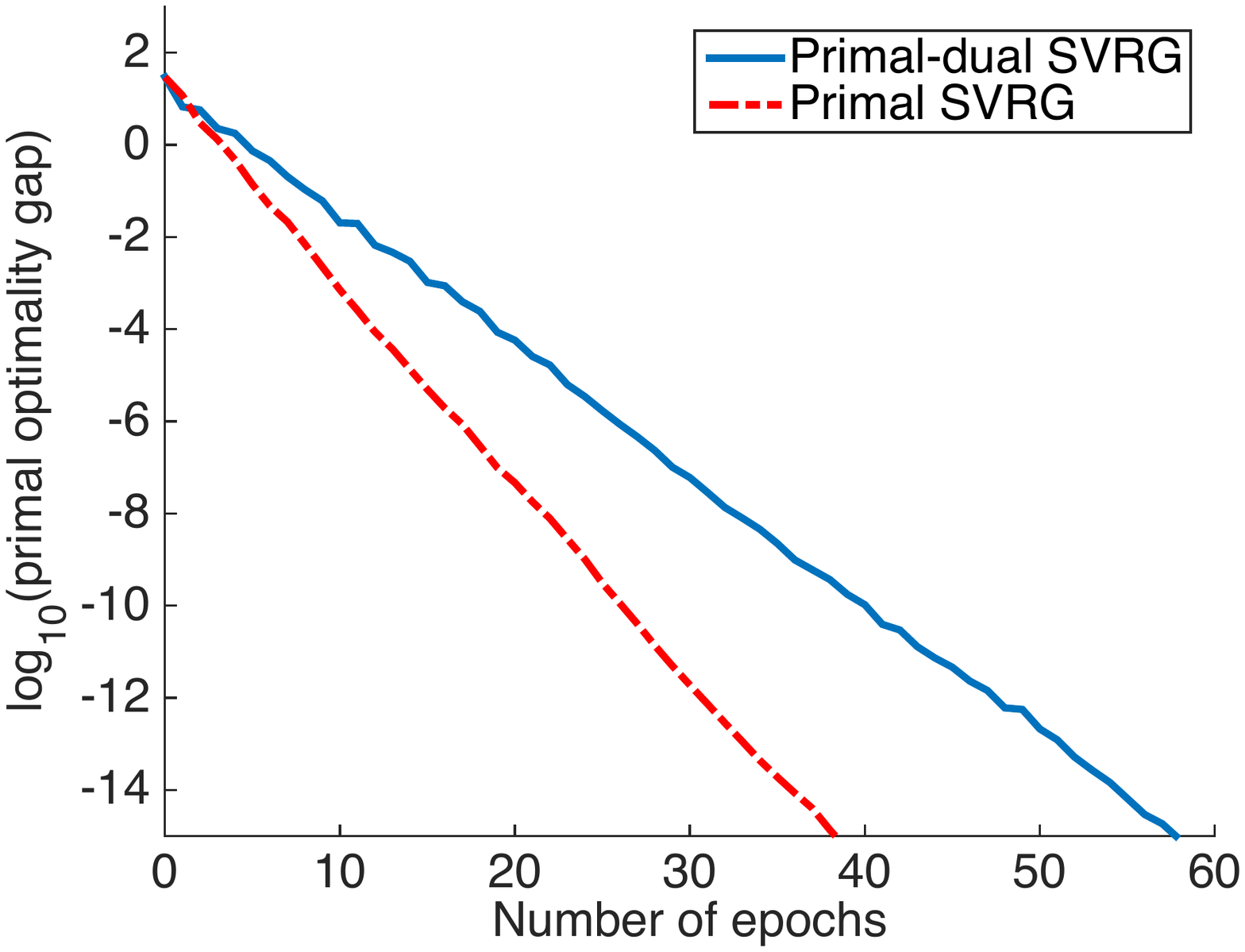}
		\caption{Data $\sim \cN(0, I_d)$}
	\end{subfigure}	
	\begin{subfigure}[t]{0.3\textwidth}
		\includegraphics[width=\textwidth]{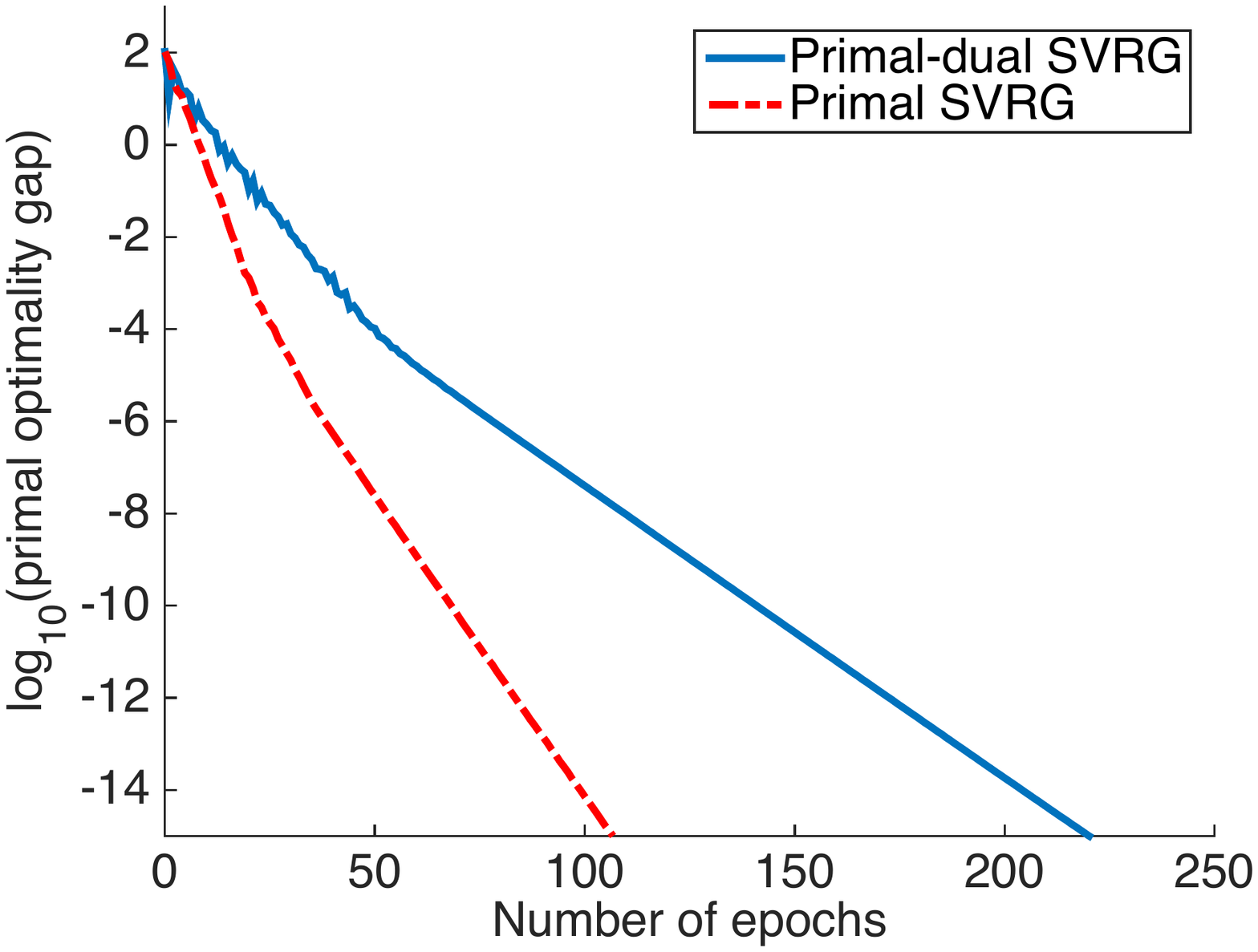}
		\caption{Data $\sim \cN(0, \Sigma)$, $\Sigma_{ij} = 2^{-|i-j|/2}$}
	\end{subfigure}
	\begin{subfigure}[t]{0.3\textwidth}
		\includegraphics[width=\textwidth]{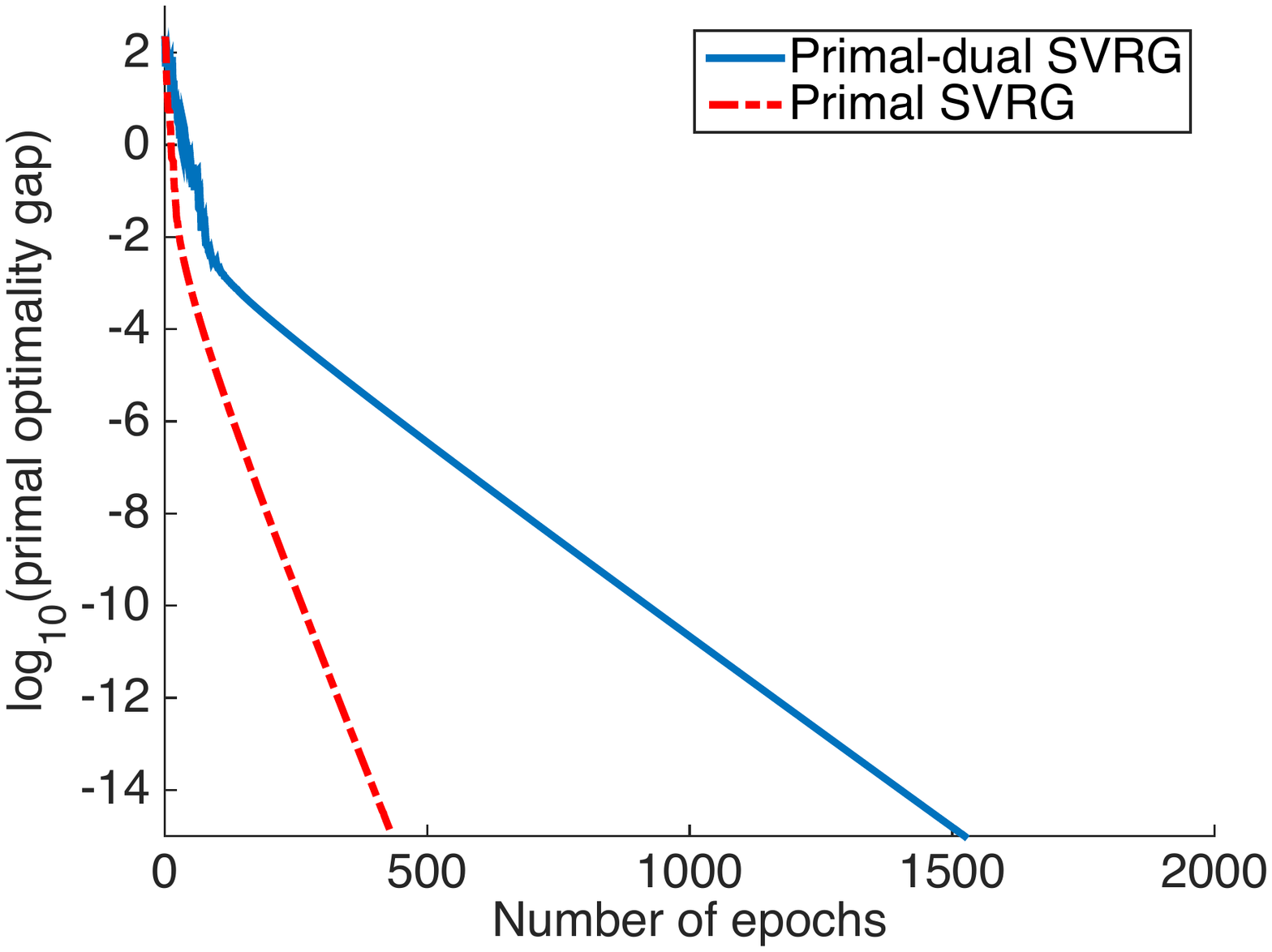}
		\caption{Data $\sim \cN(0, \Sigma)$, $\Sigma_{ij} = 2^{-|i-j|/10}$}
	\end{subfigure}
	\caption{Comparison of SVRG methods for smoothed-$L_1$-regularized regression with $d = 200$ and $n = 500$.
	}
	\label{fig:svrg-1}
\end{figure*}


We perform preliminary empirical evaluation for the following purposes:
(i) to verify that both the primal-dual gradient method (Algorithm~\ref{algo:pdg}) and the primal-dual SVRG method (Algorithm~\ref{algo:SVRG_saddle}) can indeed achieve linear convergence,
(ii) to investigate the convergence rates of Algorithms \ref{algo:pdg} and \ref{algo:SVRG_saddle}, in comparison with their primal-only counterparts (i.e., the usual gradient descent and SVRG algorithms for the primal problem),
and (iii) to compare the convergence rates of Algorithms~\ref{algo:pdg} and \ref{algo:SVRG_saddle}.

We consider the linear regression problem with smoothed-$L_1$ regularization, formulated as
\begin{equation} \label{eqn:smoothL1-regression}
\min_{x\in \R^d} \frac{1}{2n}\norm{Ax-b}^2 + \lambda R_a(x),
\end{equation}
where $A \in \R^{n\times d}$, $b\in \R^n$ , and $R_a(x) := \sum_{i=1}^d \frac1a \left( \log(1+e^{ax_i}) + \log(1+e^{-ax_i}) \right)$ is the \emph{smoothed -$L_1$ regularization} \citep{schmidt2007fast}.\footnote{When $a>0$ is large we have $R_a(x) \approx \|x\|_1$ for all $x\in \R^d$.}
Note that $R_a(x)$ is smooth but not strongly convex, and does not have a closed-form proximal mapping.
As discussed in Section~\ref{sec:examples},
Problem~\eqref{eqn:smoothL1-regression} admits a saddle point formulation:
$$\min_{x\in \R^d} \max_{y\in \R^n} \left\{ \frac1n\left(-\frac12\norm{y}^2 - b^\top y + y^\top Ax \right) + \lambda R_a(x) \right\}.$$
In this experiment we choose $a = 10$ and $\lambda = 0.01/n$.

We generate data (i.e. rows of $A$) from a Gaussian distribution $\cN(0, \Sigma)$, where we consider three cases: (a) $\Sigma = I_d$, (b) $\Sigma_{ij} = 2^{-|i-j|/2}$, and (c) $\Sigma_{ij} = 2^{-|i-j|/10}$.
These three choices result in small, medium, and large condition numbers of $A$, respectively.\footnote{The condition number of a matrix $A$ is defined as $\sigma_{\max}(A)/\sigma_{\min}(A)$.}
In Figures~\ref{fig:batch} and \ref{fig:svrg-1}, we plot the performances of batch gradient and SVRG algorithms, where we choose $d = 200$ and $n = 500$.
We tune the step sizes in every case 
in order to observe the optimal convergence rates.

These plots show that: (i) both Algorithm~\ref{algo:pdg} and Algorithm~\ref{algo:SVRG_saddle} can indeed achieve linear convergence, verifying our theorems; 
(ii) in all our examples, primal-dual methods always converge slower than the corresponding primal methods, but they are only slower by no more than 3 times;
(iii) 
Algorithm~\ref{algo:SVRG_saddle} has a much faster convergence rate than Algorithm~\ref{algo:pdg}, especially when the condition number is large, which verifies the theoretical result that SVRG can significantly reduce the computational complexity.


\section{Conclusion}
\label{sec:con}
We prove that the vanilla primal-dual gradient method can achieve linear convergence for 
 convex-concave saddle point 
problem~\eqref{eqn:basic_primal_dual} without strong convexity in the primal variable.
We develop a novel proof strategy  and further use this proof strategy to show the linear convergence of the primal-dual SVRG method for saddle point problems with finite-sum structures.
It would be interesting to study whether our technique can be used to analyze non-convex problems.



\subsubsection*{Acknowledgements}
\label{sec:ack}
We thank Qi Lei, Yuanzhi Li, Jialei Wang, Yining Wang and Lin Xiao for helpful discussions.
Wei Hu was supported by NSF, ONR, Simons Foundation, Schmidt Foundation, Mozilla Research, Amazon Research, DARPA and SRC.

\bibliography{ref}
\bibliographystyle{plainnat}
\onecolumn
\newpage
\appendix
\section*{\Large Appendix}
\section{Omitted Proofs}
\label{sec:proof}

\subsection{Finishing the Proof of Theorem~\ref{thm:main}} \label{sec:proof-main}

\begin{proof}[Proof of Theorem~\ref{thm:main}]
	Combining~\eqref{eqn:main-ineq-1} and~\eqref{eqn:main-ineq-2}, we obtain
	\begin{align*} 
	&P_{t+1} = \lambda a_{t+1} + b_{t+1}  \\
	\le& \left( 1-   \frac{\sigma_{\min}^2}{\beta} \eta_1 \right) \lambda a_t + \lambda \sigma_{\max} \eta_1 b_t  + \frac{\sigma_{\max}}{\alpha} \left( \rho + \frac{\sigma_{\max}^2}{\alpha} \right) \eta_1 a_t 
	+ \left( 1-\alpha \eta_2 + \frac{\sigma_{\max}^2}{\alpha}\eta_1 \right) b_t \\
	=& \left( 1- \frac{\sigma_{\min}^2}{\beta} \eta_1 + \frac1\lambda \frac{\sigma_{\max}}{\alpha} \left( \rho + \frac{\sigma_{\max}^2}{\alpha} \right) \eta_1 \right) \lambda a_t +   \left( 1-\alpha \eta_2 + \left( \frac{\sigma_{\max}^2}{\alpha} + \lambda \sigma_{\max} \right) \eta_1  \right) b_t.
	\end{align*}
	If we can choose $\lambda$, $\eta_1$ and $\eta_2$ such that both coefficients
	$$c_1 \coloneqq 1- \frac{\sigma_{\min}^2}{\beta} \eta_1 + \frac1\lambda \frac{\sigma_{\max}}{\alpha} \left( \rho + \frac{\sigma_{\max}^2}{\alpha} \right) \eta_1$$
	and $$c_2 \coloneqq 1-\alpha \eta_2 + \left( \frac{\sigma_{\max}^2}{\alpha} + \lambda\sigma_{\max} \right) \eta_1$$ are strictly less than $1$, we have linear convergence.

	It remains to show that our choices of parameters 
	\begin{align*}
	\lambda &= \frac{2 \beta \sigma_{\max}\cdot \left(\rho+\frac{\sigma_{\max}^2}{\alpha}\right)}{\alpha \sigma_{\min}^2},\\
	\eta_1 &= \frac{\alpha}{(\alpha+\beta) \left( \frac{\sigma_{\max}^2}{\alpha} + \lambda\sigma_{\max} \right)}, \\
	\eta_2 &= \frac{2}{\alpha+\beta},
	\end{align*}
	give the desired upper bound on $\max\{c_1, c_2\}$.
	
	First we verify that our choices of $\eta_1$ and $\eta_2$ satisfy the requirements in Propositions~\ref{prop:primal} and \ref{prop:dual}.
	It is clear that $\eta_2 \le \frac{2}{\alpha+\beta}$ is satisfied.
	For $\eta_1$, we have
	\begin{align*}
	\eta_1
	\le \frac{\alpha}{\beta  \lambda\sigma_{\max} }
	= \frac{\alpha^2 \sigma_{\min}^2}{2\beta^2 \sigma_{\max}^2 \cdot \left(\rho+\frac{\sigma_{\max}^2}{\alpha}\right) }
	\le \frac{1}{2 \left(\rho+\frac{\sigma_{\max}^2}{\alpha}\right)}
	\le \frac{1}{\rho + \frac{\sigma_{\max}^2}{\alpha} + \frac{\sigma_{\min}^2}{\beta}}.
	\end{align*}
	Therefore the requirements in Propositions~\ref{prop:primal} and \ref{prop:dual} are satisfied.
	
	Next we calculate $c_1$ and $c_2$.
	Since $\left( \frac{\sigma_{\max}^2}{\alpha} + \lambda\sigma_{\max} \right) \eta_1 = \left( \frac{\sigma_{\max}^2}{\alpha} + \lambda\sigma_{\max} \right) \frac{\alpha}{(\alpha+\beta) \left( \frac{\sigma_{\max}^2}{\alpha} + \lambda\sigma_{\max} \right)} = \frac{\alpha}{\alpha+\beta} = \frac{\alpha}{2}\eta_2$, we have
	\begin{equation} \label{eqn:c2}
	c_2 = 1 - \frac12 \alpha \eta_2 = 1 - \frac{\alpha}{\alpha+\beta}.
	\end{equation}
	For $c_1$, since
	$ \frac1\lambda \frac{\sigma_{\max}}{\alpha} \left( \rho + \frac{\sigma_{\max}^2}{\alpha} \right)
	= \frac{\alpha \sigma_{\min}^2}{2 \beta \sigma_{\max}\cdot \left(\rho+\frac{\sigma_{\max}^2}{\alpha}\right)} \cdot \frac{\sigma_{\max}}{\alpha} \left( \rho + \frac{\sigma_{\max}^2}{\alpha} \right)
	= \frac{ \sigma_{\min}^2}{2 \beta } $, we have
	\begin{equation} \label{eqn:in-proof-1}
	c_1 = 1 - \frac{\sigma_{\min}^2}{2\beta} \eta_1
	= 1 - \frac{\sigma_{\min}^2}{2\beta} \cdot \frac{\alpha}{(\alpha+\beta) \left( \frac{\sigma_{\max}^2}{\alpha} + \lambda\sigma_{\max} \right)}.
	\end{equation}
	Note that $\lambda \sigma_{\max} = \frac{2 \beta \sigma_{\max}^2 \cdot \left(\rho+\frac{\sigma_{\max}^2}{\alpha}\right)}{\alpha \sigma_{\min}^2} \ge \frac{2\beta \sigma_{\max}^4}{\alpha^2 \sigma_{\min}^2} \ge \frac{2\sigma_{\max}^2}{\alpha}$.
	Then \eqref{eqn:in-proof-1} implies
	\begin{equation} \label{eqn:c1}
	\begin{aligned}
	c_1 &\le 1 -  \frac{\alpha\sigma_{\min}^2}{2\beta(\alpha+\beta) \left( \frac12 \lambda\sigma_{\max} + \lambda\sigma_{\max} \right)}
	=  1 -  \frac{\alpha\sigma_{\min}^2}{3\beta(\alpha+\beta)  \lambda\sigma_{\max} } \\
	&= 1 -  \frac{\alpha^2 \sigma_{\min}^4}{6\beta^2 (\alpha+\beta)  \sigma_{\max}^2 \cdot \left(\rho+\frac{\sigma_{\max}^2}{\alpha}\right)  }
	\le 1 -  \frac{\alpha^2 \sigma_{\min}^4}{12\beta^3  \sigma_{\max}^2 \cdot \left(\rho+\frac{\sigma_{\max}^2}{\alpha}\right)  }.
	\end{aligned}
	\end{equation}
	Combining \eqref{eqn:c2} and \eqref{eqn:c1}, we obtain
	\begin{align*}
	\max\{c_1, c_2\} \le \max\left\{ 1 - \frac{\alpha}{\alpha+\beta}, 1 -  \frac{\alpha^2 \sigma_{\min}^4}{12\beta^3  \sigma_{\max}^2 \cdot \left(\rho+\frac{\sigma_{\max}^2}{\alpha}\right)  } \right\}
	= 1 -  \frac{\alpha^2 \sigma_{\min}^4}{12\beta^3  \sigma_{\max}^2 \cdot \left(\rho+\frac{\sigma_{\max}^2}{\alpha}\right)  }.
	\end{align*}
	Therefore the proof is completed.
\end{proof}

\subsection{Proof of Theorem~\ref{thm:svrg}}
\label{sec:svrg_proof}

\begin{proof}[Proof of Theorem~\ref{thm:svrg}]
Denote $\sigma_{\min} := \sigma_{\min}(A)$.
Let 
\begin{align*}
\varphi_i(x, y) &:= \nabla_xL_i(x, y)  = \nabla f_i(x) + A_i^\top y, \\
\varphi(x, y) &:= \nabla_xL(x, y)  = \nabla f(x) + A^\top y, \\
\psi_i(x, y) &:= \nabla_yL_i(x, y)  =  A_i x - \nabla g_i(y), \\
\psi(x, y) &:= \nabla_yL(x, y)  =  Ax - \nabla g(y),
\end{align*}
and define $\theta: \R^{d_1} \to \R^{d_2}$ as $\theta(x) := \nabla g^*(Ax)$.
Note that we have $\theta(x^*) = y^*$ from \eqref{eqn:opt-condition}.

\paragraph{Step 1: Bound for the Primal Variable.}
Consider one iteration of the inner loop $x_{t, j+1} = x_{t, j} - \eta_1 \left(  \varphi_{i_j}(x_{t, j}, y_{t, j}) + \varphi(\tilde x_{t}, \tilde y_{t}) - \varphi_{i_j}(\tilde x_{t}, \tilde y_{t}) \right)$.
We now only consider the randomness in $i_j$, conditioned on everything in previous iterations.
Because we have $\E[x_{t, j+1}] = x_{t, j} - \eta_1  \varphi(x_{t, j}, y_{t, j}) $, using the equation $\E\norm{\xi}^2 = \norm{\E \xi}^2 + \E \norm{\xi-\E\xi}^2$
we have:
\begin{equation} \label{eqn:svrg-primal-bound}
\begin{aligned}
\E{\norm{x_{t,j+1}-x^*}^2} = 
& \norm{\expect{x_{t,j+1}-x^*}}^2 + \expect{\norm{\left(x_{t,j+1}-x^*\right)-\expect{x_{t,j+1}-x^*}}^2} \\
= & \norm{x_{t, j} - \eta_1  \varphi(x_{t, j}, y_{t, j}) - x^*}^2 + \eta_1^2 \expect{\norm{ \varphi_{i_j}(x_{t, j}, y_{t, j}) + \varphi(\tilde x_{t}, \tilde y_{t}) - \varphi_{i_j}(\tilde x_{t}, \tilde y_{t}) - \varphi(x_{t, j}, y_{t, j}) }^2}.
\end{aligned}
\end{equation}
For the first term in~\eqref{eqn:svrg-primal-bound}, we note that it has the same form as the update rule in Algorithm~\ref{algo:pdg}
for the primal variable.
Therefore, we can apply Proposition~\ref{prop:primal} and get (noticing $\sigma_{\max}(A)\le M$, and assuming $\eta_1$ is sufficiently small)
\begin{equation} \label{eqn:svrg-primal-part1}
\begin{aligned}
&\norm{x_{t,j} - \eta_1 \varphi(x_{t, j}, y_{t, j}) -x^*}^2\\
\le& \left(    \left( 1-   \frac{\sigma_{\min}^2}{\beta}\eta_1 \right) \left\| x_{t, j} - x^* \right\| + M \eta_1 \left\|  y_{t, j} - \theta(x_{t, j})  \right\|    \right)^2 \\
\le& \left[ \left( 1-   \frac{\sigma_{\min}^2}{\beta}\eta_1 \right) + \frac{\sigma_{\min}^2}{\beta}\eta_1 \right] \cdot \left[    \left( 1-   \frac{\sigma_{\min}^2}{\beta}\eta_1 \right) \left\| x_{t, j} - x^* \right\|^2 + \frac{\beta M^2}{\sigma_{\min}^2} \eta_1 \left\|  y_{t, j} - \theta(x_{t, j})  \right\|^2    \right] \\
=&  \left( 1-   \frac{\sigma_{\min}^2}{\beta}\eta_1 \right) \left\| x_{t, j} - x^* \right\|^2 + \frac{\beta M^2}{\sigma_{\min}^2} \eta_1 \left\|  y_{t, j} - \theta(x_{t, j}) \right\|^2 ,
\end{aligned}
\end{equation}
where the second inequality is due to Cauchy-Schwarz inequality.

Next, we bound the second term in \eqref{eqn:svrg-primal-bound}.
First, using $\norm{a+b}^2 \le 2\|a\|^2+2\|b\|^2$ we get
\begin{align*}
&\eta_1^2 \expect{\norm{ \varphi_{i_j}(x_{t, j}, y_{t, j}) + \varphi(\tilde x_{t}, \tilde y_{t}) - \varphi_{i_j}(\tilde x_{t}, \tilde y_{t}) - \varphi(x_{t, j}, y_{t, j}) }^2} \\
\le\,& 2\eta_1^2 \expect{\norm{ \varphi_{i_j}(x_{t, j}, y_{t, j})  - \varphi(x_{t, j}, y_{t, j}) - \varphi_{i_j}(x^*, y^*) + \varphi(x^*, y^*)  }^2}
\\& + 2\eta_1^2 \expect{\norm{  \varphi(\tilde x_{t}, \tilde y_{t}) - \varphi_{i_j}(\tilde x_{t}, \tilde y_{t}) +  \varphi_{i_j}(x^*, y^*) - \varphi(x^*, y^*) }^2} .
\end{align*}
Note that $\expect{\varphi_{i_j}(x_{t, j}, y_{t, j})  - \varphi_{i_j}(x^*, y^*) } = \varphi(x_{t, j}, y_{t, j})  - \varphi(x^*, y^*) $ and
$\expect{\varphi_{i_j}(\tilde x_{t}, \tilde y_{t})  - \varphi_{i_j}(x^*, y^*) } = \varphi(\tilde x_{t}, \tilde y_{t})  - \varphi(x^*, y^*) $.
Using $\E \norm{\xi-\E\xi}^2 \le \E\norm{\xi}^2$ we get
\begin{align*}
&\eta_1^2 \expect{\norm{ \varphi_{i_j}(x_{t, j}, y_{t, j}) + \varphi(\tilde x_{t}, \tilde y_{t}) - \varphi_{i_j}(\tilde x_{t}, \tilde y_{t}) - \varphi(x_{t, j}, y_{t, j}) }^2} \\
\le\,& 2\eta_1^2 \expect{\norm{ \varphi_{i_j}(x_{t, j}, y_{t, j})  - \varphi_{i_j}(x^*, y^*)   }^2+ \norm{ \varphi_{i_j}(\tilde x_{t}, \tilde y_{t}) - \varphi_{i_j}(x^*, y^*)  }^2} .
\end{align*}
Then from 
\begin{equation} \label{eqn:varphi-smoothness}
\begin{aligned}
\norm{ \varphi_{i}(x, y)  - \varphi_i(x^*, y^*)   }
&= \norm{ \nabla f_i(x) + A_i^\top y - \nabla f_i(x^*) - A_i^\top y^* } \\
&\le \norm{ \nabla f_i(x) - \nabla f_i(x^*) } + \norm{  A_i^\top y  - A_i^\top y^* } \\
&\le \rho \norm{x-x^*} + M \norm{y-y^*} \\
&\le \rho \norm{x-x^*} + M \left( \norm{y- \theta(x)} + \norm{\theta(x) - y^*}  \right) \\
&= \rho \norm{x-x^*} + M\left( \norm{y-\theta(x)} + \norm{\theta(x) - \theta(x^*)}  \right) \\
&\le \rho \norm{x-x^*} + M\left( \norm{y-\theta(x)} + \frac{M}{\alpha}\norm{x-x^*}  \right) \\
&= \left( \rho +M^2/\alpha  \right) \norm{x-x^*} + M  \norm{y-\theta(x)}
\end{aligned}
\end{equation}
we obtain
\begin{equation} \label{eqn:svrg-primal-part2}
\begin{aligned}
&\eta_1^2 \expect{\norm{ \varphi_{i_j}(x_{t, j}, y_{t, j}) + \varphi(\tilde x_{t}, \tilde y_{t}) - \varphi_{i_j}(\tilde x_{t}, \tilde y_{t}) - \varphi(x_{t, j}, y_{t, j}) }^2} \\
\le\,& 2\eta_1^2  \left[ \left( \rho +M^2/\alpha  \right) \norm{x_{t, j}-x^*} + M \norm{y_{t, j}- \theta(x_{t, j})} \right]^2 \\ &+2\eta_1^2 \left[ \left( \rho +M^2/\alpha  \right) \norm{\tilde x_{t}-x^*} + M \norm{\tilde y_{t}- \theta(\tilde x_{t})} \right]^2 .
\end{aligned}
\end{equation}

Plugging \eqref{eqn:svrg-primal-part1} and \eqref{eqn:svrg-primal-part2} into \eqref{eqn:svrg-primal-bound}, we have
\begin{equation} \label{eqn:svrg-primal-final}
\begin{aligned}
& \E{\norm{x_{t,j+1}-x^*}^2} \\
\le & \left( 1-   \frac{\sigma_{\min}^2}{\beta}\eta_1 \right) \left\| x_{t, j} - x^* \right\|^2 + \frac{\beta M^2}{\sigma_{\min}^2} \eta_1 \norm{ y_{t, j} - \theta(x_{t, j}) }^2 \\
&+2\eta_1^2  \left[ \left( \rho +M^2/\alpha  \right) \norm{x_{t, j}-x^*} + M \norm{y_{t, j}- \theta(x_{t, j})} \right]^2 \\ 
&+2\eta_1^2 \left[ \left( \rho +M^2/\alpha  \right) \norm{\tilde x_{t}-x^*} + M  \norm{\tilde y_{t}- \theta(\tilde x_{t})} \right]^2 \\
\le& \left( 1 - \eta_1/c_1 \right) \norm{x_{t, j}-x^*}^2 + c_2 \eta_1 \norm{ y_{t, j} - \theta(x_{t, j}) }^2 +c_3\eta_1^2 \norm{\tilde x_{t}-x^*}^2 + c_4\eta_1^2 \norm{\tilde y_{t}- \theta(\tilde x_{t})}^2,
\end{aligned}
\end{equation}
where $c_1, \ldots, c_4$ all have the form $\poly\left( \beta, \rho, M, 1/\alpha, 1/\sigma_{\min} \right)$.
Here we assume $\eta_1$ is sufficiently small.

\paragraph{Step 2: Bound for the Dual Variable.}
The dual update takes the form $y_{t, j+1} = y_{t, j} + \eta_2 \left(  \psi_{i_j}(x_{t, j}, y_{t, j}) + \psi(\tilde x_{t}, \tilde y_{t}) - \psi_{i_j}(\tilde x_{t}, \tilde y_{t}) \right)$.
Same as before,
We first only consider the randomness in $i_j$, conditioned on everything in previous iterations.

We have
\begin{equation} \label{eqn:svrg-dual-bound}
\begin{aligned}
&\expect{\norm{y_{t,j+1} - \theta(x_{t,j+1})}^2} \\
\le\,& \expect{  \left( \norm{y_{t,j+1} - \theta(x_{t,j})} + \norm{\theta(x_{t,j}) - \theta(x_{t,j+1})}   \right)^2 } \\
=\,& \expect{   \norm{y_{t,j+1} - \theta(x_{t,j})}^2 + \norm{\theta(x_{t,j}) - \theta(x_{t,j+1})}^2 + 2 \norm{y_{t,j+1} - \theta(x_{t,j})} \cdot \norm{\theta(x_{t,j}) - \theta(x_{t,j+1})}    } \\
\le\,&  \E \norm{y_{t,j+1} - \theta(x_{t,j})}^2 + \E \norm{\theta(x_{t,j}) - \theta(x_{t,j+1})}^2 + 2 \sqrt{ \E \norm{y_{t,j+1} - \theta(x_{t,j})}^2 \cdot \E \norm{\theta(x_{t,j}) - \theta(x_{t,j+1})}^2  }   \\
=\,& A + B + 2\sqrt{AB},
\end{aligned}
\end{equation}
where $A := \E \norm{y_{t,j+1} - \theta(x_{t,j})}^2 $ and $B := \E \norm{\theta(x_{t,j}) - \theta(x_{t,j+1})}^2$.
The second inequality above is due to Cauchy-Schwarz inequality.
Thus it remains to bound $A$ and $B$.

We can bound $A$ similar to \eqref{eqn:svrg-primal-bound}:
\begin{equation} \label{eqn:svrg-dual-A}
\begin{aligned}
A& = \E{\norm{y_{t,j+1}-\theta(x_{t, j})}^2} \\
&=  \norm{\expect{x_{t,j+1}-\theta(x_{t, j})}}^2 + \expect{\norm{\left(x_{t,j+1}-\theta(x_{t, j})\right)-\expect{x_{t,j+1}-\theta(x_{t, j})}}^2} \\
&=  \norm{y_{t, j} + \eta_2  \psi(x_{t, j}, y_{t, j}) - \theta(x_{t, j})}^2 + \eta_2^2 \expect{\norm{ \psi_{i_j}(x_{t, j}, y_{t, j}) + \psi(\tilde x_{t}, \tilde y_{t}) - \psi_{i_j}(\tilde x_{t}, \tilde y_{t}) - \psi(x_{t, j}, y_{t, j}) }^2}.
\end{aligned}
\end{equation}
For the first term in \eqref{eqn:svrg-dual-A}, we can directly apply \eqref{eqn:dual-decrease-1} and get (assuming $\eta_2$ to be sufficiently small)
\begin{equation} \label{eqn:svrg-dual-A-part1}
\norm{y_{t, j} + \eta_2  \psi(x_{t, j}, y_{t, j}) - \theta(x_{t, j})}
\le (1-\alpha\eta_2) \norm{y_{t, j} - \theta(x_{t, j})}.
\end{equation}
The second term in \eqref{eqn:svrg-dual-A} can be bounded in the same way as we did for the second term in \eqref{eqn:svrg-primal-bound}:
\begin{equation} \label{eqn:svrg-dual-A-part2}
\begin{aligned}
&\eta_2^2 \expect{\norm{ \psi_{i_j}(x_{t, j}, y_{t, j}) + \psi(\tilde x_{t}, \tilde y_{t}) - \psi_{i_j}(\tilde x_{t}, \tilde y_{t}) - \psi(x_{t, j}, y_{t, j}) }^2} \\
\le\,& 2\eta_2^2 \expect{\norm{ \psi_{i_j}(x_{t, j}, y_{t, j})  - \psi(x_{t, j}, y_{t, j}) - \psi_{i_j}(x^*, y^*) + \psi(x^*, y^*)  }^2}
\\& + 2\eta_2^2 \expect{\norm{  \psi(\tilde x_{t}, \tilde y_{t}) - \psi_{i_j}(\tilde x_{t}, \tilde y_{t}) +  \psi_{i_j}(x^*, y^*) - \psi(x^*, y^*) }^2} \\
\le\,& 2\eta_2^2 \expect{\norm{ \psi_{i_j}(x_{t, j}, y_{t, j})  - \psi_{i_j}(x^*, y^*)  }^2} + 2\eta_2^2 \expect{\norm{    \psi_{i_j}(\tilde x_{t}, \tilde y_{t}) -  \psi_{i_j}(x^*, y^*)  }^2} \\
\le\,& 2\eta_2^2 \left( M  \left(1+\beta/\alpha  \right) \norm{x_{t, j}-x^*} + \beta\norm{y_{t, j}-\theta(x_{t, j})}  \right)^2 \\&+ 2\eta_2^2 \left( M  \left(1+\beta/\alpha  \right) \norm{\tilde x_{t}-x^*} + \beta\norm{\tilde y_{t}-\theta(\tilde x_{t})}  \right)^2,
\end{aligned}
\end{equation}
where we have used
\begin{align*}
\norm{ \psi_{i}(x, y)  - \psi_i(x^*, y^*)   }
&= \norm{ A_i x - \nabla g_i(y) -  A_i x^* + \nabla g_i(y^*)  } \\
&\le \norm{  A_ix  - A_ix^* } + \norm{ \nabla g_i(y) - \nabla g_i(y^*) }  \\
&\le M \norm{x-x^*} + \beta \norm{y-y^*} \\
&\le M \norm{x-x^*} + \beta \left( \norm{y- \theta(x)} + \norm{\theta(x) - y^*}  \right) \\
&= M\norm{x-x^*} + \beta \left( \norm{y-\theta(x)} + \norm{\theta(x) - \theta(x^*)}  \right) \\
&\le M \norm{x-x^*} + \beta \left( \norm{y-\theta(x)} + \frac{M}{\alpha}\norm{x-x^*}  \right) \\
&=  M \left(1+\beta/\alpha  \right) \norm{x-x^*} + \beta\norm{y-\theta(x)} .
\end{align*}
Plugging \eqref{eqn:svrg-dual-A-part1} and \eqref{eqn:svrg-dual-A-part2} into \eqref{eqn:svrg-dual-A} we get
\begin{equation} \label{eqn:svrg-dual-A-bound}
\begin{aligned}
A \le\, & (1-\alpha\eta_2)^2 \norm{y_{t, j} - \theta(x_{t, j})}^2 + 2\eta_2^2 \left( M  \left(1+\beta/\alpha  \right) \norm{x_{t, j}-x^*} + \beta\norm{y_{t, j}-\theta(x_{t, j})}  \right)^2 \\&+ 2\eta_2^2 \left( M  \left(1+\beta/\alpha  \right) \norm{\tilde x_{t}-x^*} + \beta\norm{\tilde y_{t}-\theta(\tilde x_{t})}  \right)^2 \\
\le\,& \left( 1 - \eta_2/c_5 \right) \norm{y_{t, j}-\theta(x_{t, j})}^2 + c_6 \eta_2^2 \norm{ x_{t, j} - x^* }^2 +c_7\eta_2^2 \norm{\tilde x_{t}-x^*}^2 + c_8\eta_2^2 \norm{\tilde y_{t}- \theta(\tilde x_{t})}^2,
\end{aligned}
\end{equation}
where $c_5, \ldots, c_8$ all have the form $\poly\left( \beta, M, 1/\alpha \right)$.
Here we assume $\eta_1$ is sufficiently small.

For $B$, we have
\begin{equation*}
\begin{aligned}
B &= \E \norm{\theta(x_{t,j}) - \theta(x_{t,j+1})}^2 \\
&\le \left( M/\alpha \right)^2 \E \norm{x_{t, j+1} - x_{t, j}}^2 \\
&= \left( M/\alpha \right)^2 \eta_1^2 \E \norm{ \varphi_{i_j}(x_{t, j}, y_{t, j}) + \varphi(\tilde x_{t}, \tilde y_{t}) - \varphi_{i_j}(\tilde x_{t}, \tilde y_{t}) }^2 \\
&\le 2\left( M/\alpha \right)^2 \eta_1^2 \left( \E \norm{ \varphi_{i_j}(x_{t, j}, y_{t, j}) + \varphi(\tilde x_{t}, \tilde y_{t}) - \varphi_{i_j}(\tilde x_{t}, \tilde y_{t}) - \varphi(x_{t, j}, y_{t, j}) }^2 + \E \norm{\varphi(x_{t, j}, y_{t, j})}^2 \right) \\
&= 2\left( M/\alpha \right)^2 \eta_1^2 \left( \E \norm{ \varphi_{i_j}(x_{t, j}, y_{t, j}) + \varphi(\tilde x_{t}, \tilde y_{t}) - \varphi_{i_j}(\tilde x_{t}, \tilde y_{t}) - \varphi(x_{t, j}, y_{t, j}) }^2 + \E \norm{\varphi(x_{t, j}, y_{t, j}) - \varphi(x^*, y^*)}^2 \right).
\end{aligned}
\end{equation*}
Then using \eqref{eqn:svrg-primal-part2} and the smoothness of $\varphi$ (\eqref{eqn:varphi-smoothness} holds for $\varphi$ as well) we obtain
\begin{equation} \label{eqn:svrg-dual-B-bound}
\begin{aligned}
B \le\, & 4\left( M/\alpha \right)^2 \eta_1^2  \left[ \left( \rho +M^2/\alpha  \right) \norm{x_{t, j}-x^*} + M  \norm{y_{t, j}- \theta(x_{t, j})} \right]^2  \\
&+ 4\left( M/\alpha \right)^2 \eta_1^2 \left[ \left( \rho +M^2/\alpha  \right) \norm{\tilde x_{t}-x^*} + M\norm{\tilde y_{t}- \theta(\tilde x_{t})} \right]^2 \\
&+ 2\left( M/\alpha \right)^2 \eta_1^2  \left[ \left( \rho +M^2/\alpha  \right) \norm{x_{t, j}-x^*} + M  \norm{y_{t, j}- \theta(x_{t, j})} \right]^2 \\
=\,& 6\left( M/\alpha \right)^2 \eta_1^2  \left[ \left( \rho +M^2/\alpha  \right) \norm{x_{t, j}-x^*} + M  \norm{y_{t, j}- \theta(x_{t, j})} \right]^2  \\
&+ 4\left( M/\alpha \right)^2 \eta_1^2 \left[ \left( \rho +M^2/\alpha  \right) \norm{\tilde x_{t}-x^*} + M  \norm{\tilde y_{t}- \theta(\tilde x_{t})} \right]^2 \\
\le\,& c_9 \eta_1^2 \norm{y_{t, j}-\theta(x_{t, j})}^2 + c_{10} \eta_1^2 \norm{ x_{t, j} - x^* }^2 +c_{11}\eta_1^2 \norm{\tilde x_{t}-x^*}^2 + c_{12}\eta_1^2 \norm{\tilde y_{t}- \theta(\tilde x_{t})}^2,
\end{aligned}
\end{equation}
where $c_9, \ldots, c_{12}$ all have the form $\poly\left( \beta, M, 1/\alpha \right)$.

Therefore, plugging \eqref{eqn:svrg-dual-A-bound} and \eqref{eqn:svrg-dual-B-bound} into \eqref{eqn:svrg-dual-bound}, we get
\begin{equation}  \label{eqn:svrg-dual-final}
\begin{aligned}
& \expect{\norm{y_{t,j+1} - \theta(x_{t,j+1})}^2} \\
\le\,& A+B+2\sqrt{AB} \\
\le\,& A+B + \eta_1 A + \frac{B}{\eta_1} \\
\le\,& (1+\eta_1) \left( \left( 1 - \eta_2/c_5 \right) \norm{y_{t, j}-\theta(x_{t, j})}^2 + c_6 \eta_2^2 \norm{ x_{t, j} - x^* }^2 +c_7\eta_2^2 \norm{\tilde x_{t}-x^*}^2 + c_8\eta_2^2 \norm{\tilde y_{t}- \theta(\tilde x_{t})}^2 \right) \\
&+ (1+1/\eta_1) \left( c_9 \eta_1^2 \norm{y_{t, j}-\theta(x_{t, j})}^2 + c_{10} \eta_1^2 \norm{ x_{t, j} - x^* }^2 +c_{11}\eta_1^2 \norm{\tilde x_{t}-x^*}^2 + c_{12}\eta_1^2 \norm{\tilde y_{t}- \theta(\tilde x_{t})}^2 \right)\\
\le\, & \left( 1 - \eta_2/c_{13} \right) \norm{y_{t, j}-\theta(x_{t, j})}^2 + c_{14} \eta_2^2 \norm{ x_{t, j} - x^* }^2 + c_{15}\eta_2^2 \norm{\tilde x_{t}-x^*}^2 + c_{16}\eta_2^2 \norm{\tilde y_{t}- \theta(\tilde x_{t})}^2,
\end{aligned}
\end{equation}
where $c_{13}, \ldots, c_{16}$ all have the form $\poly\left( \beta, M, 1/\alpha \right)$.
Here we assume $\eta_1$ is chosen sufficiently small \emph{given $\eta_2$}.

\paragraph{Step3: Putting Things Together.}
Let $p_t := \E \norm{\tilde x_t - x^*}^2$ and $q_t := \E \norm{\tilde y_t - \theta(\tilde x_t)}^2$.
Taking expectation with respect to everything we were conditioned on, we can write \eqref{eqn:svrg-primal-final} as
\begin{equation*} 
\begin{aligned}
\E{\norm{x_{t,j+1}-x^*}^2} 
\le \left( 1 - \eta_1/c_1 \right) \E\norm{x_{t, j}-x^*}^2 + c_2 \eta_1 \E \norm{ y_{t, j} - \theta(x_{t, j}) }^2 +c_3\eta_1^2 p_t + c_4\eta_1^2 q_t.
\end{aligned}
\end{equation*}
Taking sum over $j=0, 1, \ldots, N-1$, and noticing that $ \tilde x_t = x_{t, 0}$ and that $\tilde x_{t+1} = x_{t, j_t}$ for a random $j_t \in \{0, 1, \ldots, N-1\}$, we obtain
\begin{equation*} 
\begin{aligned}
\frac1N \left( \E{\norm{x_{t,N}-x^*}^2} - \E{\norm{\tilde x_{t}-x^*}^2} \right)
\le  - \frac{\eta_1}{c_1} \E\norm{\tilde x_{t+1}-x^*}^2 + c_2 \eta_1  \E\norm{\tilde y_{t+1} - \theta(x_{t+1}) }^2 +c_3\eta_1^2 p_t + c_4\eta_1^2 q_t,
\end{aligned}
\end{equation*}
which implies
\begin{equation*} 
\begin{aligned}
 -\frac1N p_t
\le  - \frac{\eta_1}{c_1} p_{t+1}  + c_2 \eta_1  q_{t+1} +c_3\eta_1^2 p_t + c_4\eta_1^2 q_t,
\end{aligned}
\end{equation*}
i.e.
\begin{equation*}
p_{t+1} \le \left( \frac{c_1}{\eta_1N} + c_1c_3\eta_1 \right) p_t + c_1 c_2 q_{t+1} + c_1c_4\eta_1q_t.
\end{equation*}
Similarly, from \eqref{eqn:svrg-dual-final} we can get
\begin{equation*}
q_{t+1} \le \left( \frac{c_{13}}{\eta_2N} + c_{13}c_{16}\eta_2 \right) q_t + c_{13} c_{14} \eta_2 p_{t+1} + c_{13}c_{15}\eta_2 p_t.
\end{equation*}
Then it is easy to see that one can choose $\mu$ and $N $ to be sufficiently large $\poly\left( \beta, \rho, M, 1/\alpha, 1/\sigma_{\min} \right)$ and choose $\eta_1$ and $\eta_2$ to be sufficiently small $\poly\left( \beta, \rho, M, 1/\alpha, 1/\sigma_{\min} \right)^{-1}$ such that
\begin{equation*}
p_{t+1} + \mu q_{t+1} \le \frac12 \left( p_t + \mu q_t \right).
\end{equation*}
This completes the proof of Theorem~\ref{thm:svrg}.
\end{proof}

\section{Linear Convergence of the Primal-Dual Gradient Method When Both $f$ and $g$ are Smooth and Strongly Convex}
\label{sec:both_strong_proof}
In this section we show that if both $f$ and $g$ are smooth and strongly convex, Algorithm~\ref{algo:pdg} can achieve linear convergence for Problem~\eqref{eqn:basic_primal_dual}.
Note that this proof is much simpler than that of Theorem~\ref{thm:main}.

We denote $\sigma_{\max} := \sigma_{\max}(A)$.

\begin{thm}
	Suppose $f$ is $\beta_1$-smooth and $\alpha_1$-strongly convex, and $g$ is $\beta_2$-smooth and $\alpha_2$-strongly convex.
	If we choose $\eta_1 = \min\left\{ \frac{1}{\alpha_1+\beta_1}, \frac{\alpha_2}{4\sigma_{\max}^2} \right\}$ and $\eta_2 = \min\left\{ \frac{1}{\alpha_2+\beta_2}, \frac{\alpha_1}{4\sigma_{\max}^2} \right\}$ in Algorithm~\ref{algo:pdg}
	and let $R_t = \eta_2 \norm{x_t - x^*}^2 + \eta_1 \norm{y_t - y^*}^2$, then we have
	\begin{equation*}
	R_{t+1} \le \left( 1 - \frac12 \min\left\{ \frac{\alpha_1}{\alpha_1+\beta_1}, \frac{\alpha_2}{\alpha_2+\beta_2}, \frac{\alpha_1\alpha_2}{4\sigma_{\max}^2} \right\} \right) R_t.
	\end{equation*}
\end{thm}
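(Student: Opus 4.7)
The plan is to introduce error variables $u_t \defeq x_t - x^*$ and $v_t \defeq y_t - y^*$ and work with the weighted potential $R_{t+1} = \eta_2\|u_{t+1}\|^2 + \eta_1\|v_{t+1}\|^2$ directly. Using the optimality conditions $\nabla f(x^*) + A^\top y^* = 0$ and $Ax^* - \nabla g(y^*) = 0$, the two updates rewrite as
\begin{align*}
u_{t+1} &= u_t - \eta_1\left(\Delta f_t + A^\top v_t\right),\\
v_{t+1} &= v_t + \eta_2\left(A u_t - \Delta g_t\right),
\end{align*}
where $\Delta f_t \defeq \nabla f(x_t) - \nabla f(x^*)$ and $\Delta g_t \defeq \nabla g(y_t) - \nabla g(y^*)$.

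The key observation is that the weights $(\eta_2,\eta_1)$ attached to $\|u_{t+1}\|^2$ and $\|v_{t+1}\|^2$ are chosen precisely to annihilate the bilinear coupling: expanding the two squared norms produces $-2\eta_1\eta_2\langle u_t, A^\top v_t\rangle$ from the primal side and $+2\eta_1\eta_2\langle v_t, Au_t\rangle$ from the dual side, and these are equal and opposite. After this cancellation the remainder consists only of the diagonal terms $\eta_2\|u_t\|^2 + \eta_1\|v_t\|^2$, the single-block inner products $-2\eta_1\eta_2\langle u_t,\Delta f_t\rangle$ and $-2\eta_1\eta_2\langle v_t,\Delta g_t\rangle$, and the quadratic remainders $\eta_1^2\eta_2\|\Delta f_t + A^\top v_t\|^2$ and $\eta_1\eta_2^2\|Au_t - \Delta g_t\|^2$.

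To extract contraction, I will bound the single-block inner products by the standard combined smoothness/strong-convexity inequality
\[
\langle \Delta f_t, u_t\rangle \ge \tfrac{\alpha_1\beta_1}{\alpha_1+\beta_1}\|u_t\|^2 + \tfrac{1}{\alpha_1+\beta_1}\|\Delta f_t\|^2,
\]
and its analogue for $g$. The quadratic remainders are opened via $\|p+q\|^2 \le 2\|p\|^2 + 2\|q\|^2$ and then controlled by $\|A^\top v_t\|\le \sigma_{\max}\|v_t\|$ and $\|Au_t\|\le \sigma_{\max}\|u_t\|$. The resulting $\|\Delta f_t\|^2$ term with coefficient $2\eta_1\eta_2\bigl(\eta_1 - \tfrac{1}{\alpha_1+\beta_1}\bigr)$ is non-positive exactly when $\eta_1 \le 1/(\alpha_1+\beta_1)$, and similarly for $\|\Delta g_t\|^2$ when $\eta_2\le 1/(\alpha_2+\beta_2)$; this is where the first component of each step-size bound enters.

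After these absorptions, the coefficient of $\eta_2\|u_t\|^2$ is at most $1 - \eta_1\alpha_1 + 2\eta_1\eta_2\sigma_{\max}^2$ (using $\tfrac{\alpha_1\beta_1}{\alpha_1+\beta_1}\ge \alpha_1/2$), and symmetrically the coefficient of $\eta_1\|v_t\|^2$ is at most $1 - \eta_2\alpha_2 + 2\eta_1\eta_2\sigma_{\max}^2$. The second component of each step-size bound, $\eta_2 \le \alpha_1/(4\sigma_{\max}^2)$ and $\eta_1 \le \alpha_2/(4\sigma_{\max}^2)$, is exactly what is needed so that $2\eta_1\eta_2\sigma_{\max}^2 \le \tfrac12\min\{\eta_1\alpha_1,\eta_2\alpha_2\}$, leaving coefficients at most $1 - \eta_1\alpha_1/2$ and $1 - \eta_2\alpha_2/2$. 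Substituting the concrete choices of $\eta_1$ and $\eta_2$, both $\eta_1\alpha_1/2$ and $\eta_2\alpha_2/2$ reduce to $\tfrac12\min\{\alpha_i/(\alpha_i+\beta_i),\,\alpha_1\alpha_2/(4\sigma_{\max}^2)\}$, and the minimum over $i\in\{1,2\}$ yields the advertised rate. The only real subtlety I anticipate is balancing the Young's-inequality constants on the two quadratic remainders so that the $\|\Delta f_t\|^2$ piece and the $\sigma_{\max}^2\|v_t\|^2$ piece are each absorbed by a \emph{different} one of the two step-size conditions without double-counting; the cross-term cancellation in the second paragraph does the heavy lifting that would otherwise make this a genuinely coupled estimate.
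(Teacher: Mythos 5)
Your proposal is correct and follows essentially the same route as the paper's proof in Appendix~B: the same weighted potential $\eta_2\norm{x_t-x^*}^2+\eta_1\norm{y_t-y^*}^2$ with cancellation of the bilinear cross terms, the same combined smoothness/strong-convexity inequality (Lemma 3.11 of \citep{bubeck2015convex}) to absorb the $\norm{\Delta f_t}^2$ and $\norm{\Delta g_t}^2$ terms under $\eta_i\le 1/(\alpha_i+\beta_i)$, and the same use of $\eta_1\le\alpha_2/(4\sigma_{\max}^2)$, $\eta_2\le\alpha_1/(4\sigma_{\max}^2)$ to control the $2\eta_1\eta_2\sigma_{\max}^2$ coupling term. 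Your shortcut $\frac{\alpha_i\beta_i}{\alpha_i+\beta_i}\ge\alpha_i/2$ is a slightly cleaner way to reach the same final contraction factor than the paper's intermediate $2\alpha_i^2\eta_i^2$ bound.
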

\begin{proof}
From the update rule $x_{t+1} = x_t - \eta_1 \nabla_x L(x_t, y_t) $ we have
\begin{equation} \label{eqn:2sc-inproof}
\norm{x_{t+1}-x^*}^2 = \norm{x_t - x^*}^2 - 2\eta_1 \left\langle x_t - x^*, \nabla_x L(x_t, y_t) \right\rangle + \eta_1^2 \norm{\nabla_x L(x_t, y_t)}^2.
\end{equation}
The inner product term in \eqref{eqn:2sc-inproof} can be bounded as
\begin{align*}
\left\langle x_t - x^*,\nabla_x L(x_t, y_t) \right\rangle 
&= \left\langle x_t - x^*, \nabla_x L(x_t, y_t) - \nabla_x L(x^*, y^*) \right\rangle \\
&= \langle x_t - x^*, \nabla f(x_t) - \nabla f(x^*)\rangle + \langle x_t -x ^*, A^\top(y_t-y^*)\rangle \\
&\ge  \frac{\alpha_1\beta_1}{\alpha_1+\beta_1}\norm{x_t - x^*}^2 + \frac{1}{\alpha_1 + \beta_1}\norm{\nabla f(x_t) - \nabla f(x^*)}^2 + \langle x_t -x ^*, A^\top(y_t-y^*)\rangle,
\end{align*}
where we have used Lemma 3.11 in \citep{bubeck2015convex}.

The third term in \eqref{eqn:2sc-inproof} can be bounded as
\begin{align*}
\norm{\nabla_x L(x_t, y_t)}^2
&= \norm{\nabla_x L(x_t, y_t) - \nabla_x L(x^*, y^*)}^2
\\&= \norm{\nabla f(x_t) - \nabla f(x^*) + A^\top(y_t - y^*)}^2
\\&\le 2\left( \norm{\nabla f(x_t) - \nabla f(x^*)}^2 + \norm{A^\top(y_t - y^*)}^2 \right)
\\&\le 2 \left(\norm{\nabla f(x_t)-\nabla f(x^*)}^2 + \sigma_{\max}^2\norm{y_t - y^*}^2\right).
\end{align*}

By symmetry, for the dual variable, we have an inequality similar to \eqref{eqn:2sc-inproof}.
Combining everything together, and using $\eta_1 \le \frac{1}{\alpha_1 + \beta_1}$ and $\eta_2 \le \frac{1}{\alpha_2+\beta_2}$, with some routine calculations we can get
 \begin{align*}
&\eta_2 \norm{x_{t+1}-x^*}^2 + \eta_1 \norm{y_{t+1}-y^*}^2 \\
\le &\left(1-2\alpha_1\eta_1+2\alpha_1^2\eta_1^2+2\sigma_{\max}^2\eta_1\eta_2\right)\eta_2\norm{x_t-x^*}^2 + \left(1-2\alpha_2\eta_2 +2\alpha_2^2\eta_2^2 + 2\sigma_{\max}^2\eta_1\eta_2\right)\eta_1\norm{y_t-y^*}^2.
\end{align*}
Then, from our choices of $\eta_1$ and $\eta_2$, the above inequality implies
\begin{align*}
R_{t+1}
&\le \left(1-2\alpha_1\eta_1+\alpha_1\eta_1+ \frac12 \alpha_1\eta_1\right)\eta_2\norm{x_t-x^*}^2 + \left(1-2\alpha_2\eta_2 +\alpha_2\eta_2 + \frac12\alpha_2\eta_2\right)\eta_1\norm{y_t-y^*}^2 \\
&\le \left(1 - \frac{1}{2}\min\{\alpha_1\eta_1, \alpha_2\eta_2 \}\right) R_t \\
&= \left( 1 - \frac12 \min\left\{ \frac{\alpha_1}{\alpha_1+\beta_1}, \frac{\alpha_2}{\alpha_2+\beta_2}, \frac{\alpha_1\alpha_2}{4\sigma_{\max}^2} \right\} \right) R_t. \qedhere
\end{align*}
\end{proof}

\end{document}